\def\eqref#1{equation~\ref{#1}}
\def\1{\bm{1}}
\DeclareMathAlphabet{\mathsfit}{\encodingdefault}{\sfdefault}{m}{sl}
\SetMathAlphabet{\mathsfit}{bold}{\encodingdefault}{\sfdefault}{bx}{n}
\newcommand{\E}{\mathbb{E}}
\definecolor{mypink1}{rgb}{0.858, 0.188, 0.478}
\def\reals{\mathbb{R}}
\def\E{\mathbb{E}}
\newtheorem{lemma}{Lemma}
\newtheorem{corollary}{Corollary}
\newtheorem{theorem}{Theorem}
\newtheorem{definition}{Definition}
\newcommand\myeqref[1]{(\ref{#1})}
\title{Continuized Acceleration for Quasar Convex Functions 
in Non-Convex Optimization}
\author{Jun-Kun Wang and Andre Wibisono \\
Department of Computer Science,\ Yale University\\
       \texttt{\{jun-kun.wang,andre.wibisono\}@yale.edu}
}
\begin{document}

\maketitle

\begin{abstract}
Quasar convexity is a condition that allows some first-order methods to 
efficiently minimize a function even when the optimization landscape is non-convex. 
Previous works develop near-optimal accelerated algorithms for minimizing this class of functions, however, they require a subroutine of binary search which results in multiple calls to gradient evaluations in each iteration, and consequently the total number of gradient evaluations does not match a known lower bound.
In this work, we show that a recently proposed continuized Nesterov acceleration can be applied to minimizing quasar convex functions
and achieves the optimal bound with a high probability.
Furthermore, we find that the objective functions of training generalized linear models (GLMs) satisfy quasar convexity, which broadens the applicability of the relevant algorithms, while known practical examples of quasar convexity in non-convex learning are sparse in the literature. We also show that if a
smooth and one-point strongly convex, Polyak-\L{}ojasiewicz, or quadratic-growth function satisfies quasar convexity, then attaining an accelerated linear rate for minimizing the function is possible under certain conditions, while acceleration is not known in general for these classes of functions.
\end{abstract}

\section{Introduction}

Momentum has been the main workhorse for training machine learning models \citep{KB15,WRSSR17,KH1918,RKK18,Rnet16,SZ15,KSH12}. In convex learning and optimization, several momentum methods have been developed under different machineries, which include the ones built on Nesterov's estimate sequence \citep{N83a,N13}, methods derived from ordinary differential equations and continuous-time techniques,
\citep{KBB15,SRBA17,ACPR18,SBC14,wibisono2016variational,SDJS18,DO19}, approaches based on dynamical systems and control \citep{HL17,WRJ21}, algorithms generated from playing a two-player zero-sum game via no-regret learning strategies \citep{WAL21,WA18,CST20}, and a recently introduced continuized acceleration \citep{E22}. On the other hand, in the non-convex world, despite numerous empirical evidence confirms that momentum methods converge faster than gradient descent (GD) in several applications, see e.g., \citet{SMDH13,LM20}, 
first-order accelerated methods that provably find a global optimal point are sparse in the literature.
Indeed, there are just few results showing acceleration over GD that we are aware.
\citet{WLA21} show Heavy Ball has an accelerated linear rate for training an over-parametrized ReLU network and a deep linear network, where the accelerated linear rate has a square root dependency on the condition number of a neural tangent kernel matrix at initialization, while the linear rate of GD depends linearly on the condition number. A follow-up work of 
\citet{WLWH22} shows that Heavy Ball has an acceleration for minimizing a class of Polyak-\L{}ojasiewicz functions \citep{P63}.
When the goal is not finding a global optimal point but a first-order stationary point, some benefits of incorporating the dynamic of momentum can be shown \citep{CO19,CM21,LKC21}. Nevertheless, theoretical-grounded momentum methods in non-convex optimization are still less investigated to our knowledge. 

With the goal of advancing the progress of momentum methods in non-convex optimization in mind, we study efficiently solving $\min_{w} f(w)$, where the function $f(\cdot)$
satisfies \emph{quasar-convexity} \citep{HSS20,HMR18,NGGP19,GG17,BM20}, which is defined in the following. Under quasar convexity, it can be shown that GD or certain momentum methods 
can globally minimize a function even when the optimization landscape is non-convex.
\begin{definition} (\textbf{Quasar convexity})
Let $\rho > 0$. Denote $w_{*}$ a global minimizer of $f(\cdot):\reals^{d} \rightarrow \reals$.
The function $f(\cdot)$ is $\rho$-quasar convex if for all $w \in \reals^{d}$, one has:
\begin{equation}
f(w_*) \geq f(w) + \frac{1}{\rho} \langle \nabla f(w), w_* - w \rangle.
\end{equation}
For $\mu > 0$, the function $f(\cdot)$ is $(\rho, \mu)$-strongly quasar convex if for all $w \in \reals^{d}$, one has:
\begin{equation}
f(w_*) \geq f(w) + \frac{1}{\rho} \langle \nabla f(w), w_* - w \rangle + \frac{\mu}{2} \| w_* - w\|^2.
\end{equation}
\end{definition}
For more characterizations of quasar convexity, we refer the reader to \citet{HSS20} (Appendix~D in the paper), where a thorough discussion is provided.
Recall that a function $f(\cdot)$ is $L$-smooth if $f(x) \leq f(y) + \langle \nabla f(y), x - y \rangle + \frac{L}{2} \| x - y\|^{2}$ for any $x$ and $y$, where $L>0$ is the smoothness constant.
For minimizing $L$-smooth and $\rho$-quasar convex functions,
the algorithm of \citet{HSS20} takes $O\left( \frac{L^{1/2} \| w_0 - w_* \| }{\rho \epsilon^{1/2}} \right)  $ number of iterations and 
$O\left( \frac{L^{1/2} \| w_0 - w_* \| }{\rho \epsilon^{1/2}} \log \left( \frac{1}{\rho \epsilon}  \right) \right)  $ total number of function and gradient evaluations
for getting an $\epsilon$-optimality gap.
For $L$-smooth and $(\rho,\mu)$-strongly quasar convex functions, 
the algorithm of \citet{HSS20} takes $O\left( \frac{\sqrt{L/\mu} }{\rho} \log \left( \frac{ V }{\epsilon} \right)  \right)  $ number of iterations
and $O\left( \frac{\sqrt{L/\mu} }{\rho} \log \left( \frac{V}{\epsilon} \right)
\log \left( \frac{L/\mu}{\rho} \right)  \right)  $ number of function and gradient evaluations, where $V:= f(w_0) - f(w_*) + \frac{\mu}{2} \| z_0 - w_*\|^2$, and $w_{0}$ and $z_{0}$ are some initial points.
Both results of \citet{HSS20} improve those in the previous works of \citet{NGGP19} and \citet{GG17} for minimizing quasar and strongly quasar convex functions.
A lower bound $\Omega\left( \frac{L^{1/2} \| w_0 - w_* \| }{\rho \epsilon^{1/2}} \right)  $ on the number of gradient evaluations for minimizing quasar convex functions via any first-order deterministic methods is also established in \citet{HSS20}.
The additional logarithmic factors in the (upper bounds of the) number of gradient evaluations, compared to the iteration complexity, result from a binary-search subroutine that is executed in each iteration to determine the value of a specific parameter of the algorithm.
A similar concern applies to \citet{BM20}, where the algorithm assumes an oracle is available but its implementation needs a subroutine which demands multiple function and gradient evaluations in each iteration. Hence, the open questions are whether the additional logarithmic factors in the total number of gradient evaluations can be removed and whether function evaluations are necessary
for an accelerated method to minimize quasar convex functions.

We answer them by showing an accelerated \emph{randomized} algorithm 
that avoids the subroutine, makes only one gradient call per iteration, and does not need function evaluations. 
Consequently, the complexity of gradient calls does not incur the additional logarithmic factors as the previous works, and, perhaps more importantly, the computational cost per iteration is significantly reduced. 
The proposed algorithms are built on the \emph{continuized discretization} technique that is recently introduced by \citet{E22} to the optimization community, which offers a nice way to implement a continuous-time dynamic as a discrete-time algorithm. Specifically, the technique allows one to use differential calculus to design and analyze an algorithm in continuous time, while the discretization of the continuized process does not suffer any discretization error thanks to the fact that the Poisson process can be simulated exactly. 
Our acceleration results in this paper 
champion the approach, and provably showcase the advantage of momentum over GD for minimizing quasar convex functions.

While previous works of quasar convexity are theoretically interesting, a lingering issue is that few examples are known in non-convex machine learning. While some synthetic functions are shown in previous works \citep{HSS20,NGGP19,GG17}, the only practical non-convex learning applications that we are aware are given by \citet{HMR18}, where they show that for learning a class of linear dynamical systems, a relevant objective function over a convex constraint set satisfies quasar convexity,
and by \citet{FSS18}, where they show that a robust linear regression with Tukey's biweight loss
and a GLM with an increasing link function 
satisfy quasar convexity,
under the assumption that the link function has a bounded second derivative (which excludes the case of Leaky-ReLU).
In this work, we find that the objective functions of learning
GLMs with link functions being logistic, quadratic, ReLU, or Leaky-ReLU satisfy (strong) quasar convexity, under mild assumptions on the data distribution.
We also establish connections between strong quasar convexity and one-point convexity \citep{GGIM22,KLY18},
the Polyak-\L{}ojasiewicz (PL) condition \citep{P63,KNS16}, and the quadratic-growth (QG) condition \citep{DL18}.
Our findings suggest that investigating minimizing quasar convex functions is not only theoretically interesting, but is also practical
for certain non-convex learning applications. 

To summarize, our contributions include:
\begin{itemize}
\item For minimizing functions satisfying quasar convexity or strong quasar convexity,
we show that the continuized Nesterov acceleration not only has the optimal iteration complexity, but also makes the same number of gradient calls required to get an expected $\epsilon$-optimality gap or an $\epsilon$-gap with high probability.
The continuized Nesterov acceleration avoids multiple gradient calls in each iteration, in contrast to the previous works.
We also propose an accelerated algorithm that uses stochastic pseudo-gradients for learning a class of GLMs.
\item We find that GLMs with various link functions satisfy quasar convexity. 
Moreover, we show that if a smooth one-point convex, PL, or QG function
satisfies quasar convexity, then acceleration for minimizing the function is possible under certain conditions, while acceleration over GD is not known for these classes of functions in general in the literature.
\end{itemize} 

\section{Preliminaries}

\noindent
\textbf{Related works of gradient-based algorithms for structured non-convex optimization:} \\
Studying gradient-based algorithms under some relaxed notions of convexity has seen a growing interest in non-convex optimization, e.g., \citep{GSL21,VBS19,vaswani2022towards,J20}.
These variegated notions include one-point convexity 
\citep{GGIM22,KLY18}, the PL condition \citep{P63,KNS16},
the QG condition \citep{DL18}, 
the error bound condition \citep{LT93,DL18},
local quasi convexity \citep{HLS16}, the regularity condition \citep{YLC19}, 
variational coherence \citep{ZMBBG17}, and quasar convexity \citep{HSS20,HMR18,NGGP19,GG17,BM20}. For more details, we refer the reader to the references therein.

\noindent
\textbf{The continuized technique of designing optimization algorithms:} \\
The continuized technique was introduced in \citet{AF02} under the subject of Markov chain and was recently used in optimization by \citet{E22},
where they consider the following random process and build a connection to Nesterov's acceleration \citep{N83a,N13}:
\begin{equation} \label{dxdz}
\begin{aligned} 
dw_t & = \eta_t ( z_t - w_t ) dt - \gamma_t \nabla f(w_t) d N(t) 
\\ d z_t & = \eta'_t (w_t - z_t) dt - \gamma'_t \nabla f(w_t) d N(t),
\end{aligned}
\end{equation}
in which $\eta_{t}, \eta'_{t}, \gamma_{t}, \gamma'_t$ are parameters to be chosen and $d N(t)$
is the Poisson point measure. 
More precisely, one has $d N(t) = \sum_{k \geq 1} \delta_{{T_k}}(dt)$,
where the random times $T_{1},T_{2}, \dots, T_{k}, \dots $ are such that the increments $T_{1}, T_{2}- T_{1}, T_{3}- T_{2}, \dots$ follow i.i.d.~from the exponential distribution with mean $1$ (so $\E[T_k] = k$).
Between the random times, the continuized process \myeqref{dxdz} reduces to a system of ordinary differential equations:
\begin{align} 
dw_t & = \eta_t ( z_t - w_t ) dt  \label{dxdzd}
\\ d z_t & = \eta'_t (w_t - z_t) dt. \label{dxdzd1}
\end{align}
At the random time $T_{k}$, the dynamic \myeqref{dxdz} is equivalent to taking GD steps:
\begin{align} 
w_{T_k} & = w_{T_{k-}} - \gamma_{T_{k}} \nabla f(w_{T_{k-}}  ) \label{dxdz3}
\\
z_{T_k} & = z_{T_{k-}} - \gamma'_{T_{k}} \nabla f(w_{T_{k-}}  ). \label{dxdz4}
\end{align}
A nice feature of this continuized technique is that one can implement
the dynamic \myeqref{dxdz} without causing any discretization error,
thanks to the fact that the Poisson process can be simulated exactly.
In contrast, other continuous-time approaches \citep{KBB15,SRBA17,ACPR18,SBC14,wibisono2016variational,SDJS18,DO19} do not enjoy such a benefit.
The formal statement of the \emph{continuized discretization} is replicated as follows.
\begin{lemma}(Theorem 3 in \citet{E22}) \label{thm:even}
The discretization of the continuized Nesterov acceleration \myeqref{dxdz} can be implemented as
$\tilde{w}_{k}:= w_{{T_k}}$, $\tilde{v}_{k}:= w_{{T_{k+1}-}}$, $\tilde{z}_{k}:= z_{{T_k}}$. Furthermore, the update of the discretized process is in the following form:
\begin{align}
\tilde{v}_k & = \tilde{w}_k + \tau_k ( \tilde{z}_k - \tilde{w}_k) \label{g1}
\\ \tilde{w}_{k+1} &= \tilde{v}_k - \tilde{\gamma}_{k+1} \nabla f(\tilde{v}_k) \label{g2}
\\ \tilde{z}_{k+1} &= \tilde{z}_k + \tau'_k ( \tilde{v}_k - \tilde{z}_k) - \tilde{\gamma}'_{k+1} \nabla f(\tilde{v}_k), \label{g3} 
\end{align}
where $\tau_{k},\tau'_{k}, \tilde{\gamma}_{k}, \tilde{\gamma}'_{k}$ are random parameters that are functions of $\eta_{t},\eta'_{t},\gamma_{t}$, and $\gamma'_{t}$.
\end{lemma}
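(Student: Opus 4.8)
The plan is to exploit the piecewise structure of the continuized process \myeqref{dxdz}: on each random interval $[T_k,T_{k+1})$ it coincides with the deterministic linear flow \myeqref{dxdzd}--\myeqref{dxdzd1}, and at each arrival time $T_{k+1}$ it performs the gradient jump \myeqref{dxdz3}--\myeqref{dxdz4}. Since the Poisson point process almost surely has no accumulation of arrivals, the paths $t\mapsto(w_t,z_t)$ are right-continuous with left limits, so $w_{T_{k+1}-}$ and $z_{T_{k+1}-}$ are well defined and this decomposition is exhaustive. I would argue on a fixed realization of the arrival times, with the convention $T_0=0$ (so $\tilde w_0=w_0$, $\tilde z_0=z_0$).

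First I would solve the between-jumps ODE in closed form. Subtracting \myeqref{dxdzd1} from \myeqref{dxdzd} gives $\frac{d}{dt}(w_t-z_t)=-(\eta_t+\eta'_t)(w_t-z_t)$, so the gap decouples and $w_t-z_t=e^{-\int_{T_k}^t(\eta_s+\eta'_s)\,ds}\,(w_{T_k}-z_{T_k})$ for $t\in[T_k,T_{k+1})$. Substituting this back into \myeqref{dxdzd} and \myeqref{dxdzd1} and integrating from $T_k$ yields $w_t=w_{T_k}-\tau_k(t)\,(w_{T_k}-z_{T_k})$ and $z_t=z_{T_k}+\sigma_k(t)\,(w_{T_k}-z_{T_k})$, with $\tau_k(t):=\int_{T_k}^t\eta_r\,e^{-\int_{T_k}^r(\eta_s+\eta'_s)\,ds}\,dr$ and $\sigma_k(t)$ the same integral with $\eta'_r$ in place of $\eta_r$. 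These are deterministic functionals of $\eta,\eta'$ and the two endpoints, hence legitimate random parameters; moreover $\tau_k(t)+\sigma_k(t)=1-e^{-\int_{T_k}^t(\eta_s+\eta'_s)\,ds}$, which is $<1$ for the nonnegative schedules used, so in particular $\tau_k:=\tau_k(T_{k+1})<1$ almost surely, a fact needed below.

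Next I would assemble the discrete update. Setting $\tilde v_k:=w_{T_{k+1}-}$ and taking the left limit $t\to T_{k+1}$ in the formula for $w_t$ gives exactly \myeqref{g1}, namely $\tilde v_k=\tilde w_k+\tau_k(\tilde z_k-\tilde w_k)$. Applying the gradient jump \myeqref{dxdz3} at $T_{k+1}$ gives $\tilde w_{k+1}=w_{T_{k+1}}=\tilde v_k-\tilde\gamma_{k+1}\nabla f(\tilde v_k)$ with $\tilde\gamma_{k+1}:=\gamma_{T_{k+1}}$, i.e. \myeqref{g2}. For the $z$-coordinate, combining $z_{T_{k+1}-}=\tilde z_k+\sigma_k(\tilde w_k-\tilde z_k)$ with the jump \myeqref{dxdz4} gives $\tilde z_{k+1}=\tilde z_k+\sigma_k(\tilde w_k-\tilde z_k)-\gamma'_{T_{k+1}}\nabla f(\tilde v_k)$; the last step is the bookkeeping that rewrites $\tilde w_k-\tilde z_k$ in terms of $\tilde v_k$. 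From \myeqref{g1} one has $\tilde v_k-\tilde z_k=(1-\tau_k)(\tilde w_k-\tilde z_k)$, and since $1-\tau_k\ne 0$ almost surely we may set $\tau'_k:=\sigma_k/(1-\tau_k)$ and $\tilde\gamma'_{k+1}:=\gamma'_{T_{k+1}}$ to obtain \myeqref{g3}. This shows $(\tilde w_k,\tilde v_k,\tilde z_k)$ satisfies \myeqref{g1}--\myeqref{g3} with the claimed random coefficients, and the scheme is an \emph{exact} discretization because each step only requires sampling an $\mathrm{Exponential}(1)$ inter-arrival time and evaluating the closed-form integrals $\tau_k,\sigma_k$, which for the schedules used in Nesterov acceleration are elementary.

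I expect the only delicate points to be the measurability bookkeeping — spelling out that the left limits exist along the almost-surely discrete set of arrivals and that $\tau_k,\sigma_k,\tilde\gamma_{k+1},\tilde\gamma'_{k+1}$ are measurable functions of $(\eta,\eta',\gamma,\gamma')$ and the arrival times — together with the minor point that the reparametrization $\tau'_k=\sigma_k/(1-\tau_k)$ is well posed, which is precisely the $\tau_k+\sigma_k<1$ estimate above. Neither is a genuine obstruction: the substance of the lemma is the explicit integration of the between-jumps linear system plus this change of variables.
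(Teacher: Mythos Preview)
Your proof is correct and follows essentially the same route as the paper's: integrate the linear ODE on $[T_k,T_{k+1})$ to write $w_{T_{k+1}-}$ and $z_{T_{k+1}-}$ as affine combinations of $\tilde w_k,\tilde z_k$, apply the gradient jumps \myeqref{dxdz3}--\myeqref{dxdz4}, and then eliminate $\tilde w_k-\tilde z_k$ in favour of $\tilde v_k-\tilde z_k$ via the identity $\tilde v_k-\tilde z_k=(1-\tau_k)(\tilde w_k-\tilde z_k)$, setting $\tau'_k=\sigma_k/(1-\tau_k)$ (the paper writes $\tau''_k$ for your $\sigma_k$). Your version is slightly more explicit than the paper's in that you actually exhibit the closed-form integrals $\tau_k(t)=\int_{T_k}^t\eta_r\,e^{-\int_{T_k}^r(\eta_s+\eta'_s)\,ds}\,dr$ and verify $\tau_k+\sigma_k<1$ to justify the division, whereas the paper just asserts that $\tau_k,\tau''_k$ exist as functions of $\eta_t,\eta'_t$.
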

We replicate the proof of Lemma~\ref{thm:even} in Appendix~\ref{app:thm:even}.
Using the continuized technique, \citet{E22} analyze the continuized Nesterov acceleration \myeqref{dxdz} for minimizing smooth convex functions and smooth strongly convex functions with an application in asynchronous distributed optimization.

\section{Main results: application aspects} \label{sec:app}

\subsection{Examples of quasar convexity} \label{sec:qc}

We start by identifying a class of functions that satisfy quasar convexity.
To get the ball rolling, we need to introduce two notions first.

\begin{definition} (\textbf{$C_v$-generalized variational coherence w.r.t.~a function $h(\cdot,\cdot)$})
Denote $w_{*} \in \reals^{d}$ a global minimizer of a function $f(\cdot)$.
We say that the function $f(\cdot)$ is generalized variational coherent with the parameter $C_{v}>0$ if for all $w \in \reals^{d}$, one has:
$\langle \nabla f(w), w - w_* \rangle \geq C_v h(w,w_*),$  
where $h(w,w_*): \reals^{d} \times \reals^{d} \rightarrow \reals_{+}$ is a non-negative function whose inputs are $w$ and $w_{*}$.
\end{definition}
Observe that if a function is generalized variational coherent, then it is variational coherent, i.e., $\langle \nabla f(w), w - w_* \rangle \geq 0$, which is a condition that 
allows an almost-sure convergence to $w_{*}$ via mirror descent \citep{ZMBBG17}.
Also, when the non-negative function $h(w,w_*)$ is a squared $l_{2}$ norm, i.e., $h(w,w_*) = \| w - w_* \|^2_{2}$, it becomes one-point convexity, i.e., $\langle \nabla f(w), w - w_* \rangle \geq C_{v} \| w - w_* \|^{2}_{2}$. In the literature, a few non-convex learning problems have been shown to exhibit one-point convexity, see e.g., \citet{YS20,SO22,LY17,KLY18}. 
However, \citet{GGIM22} recently show that for minimizing the class of functions that are one-point convex w.r.t.~a global minimizer $w_{*}$ and have gradient Lipschitzness in the sense that $\| \nabla f(w) - \nabla f(w_*) \|_2 \leq L \| w - w_* \|_{2}$ for any $w \in \reals^{d}$ (which is called the upper error bound condition in their terminology), GD is optimal among \emph{any} first-order methods,
which suggests that a different condition than 
the upper error bound condition might be necessary to show acceleration over GD for functions satisfying one-point convexity.

\begin{definition} (\textbf{$C_l$-generalized smoothness w.r.t.~a function $h(\cdot,\cdot)$})
Denote $w_{*} \in \reals^{d}$ a global minimizer of a function $f(\cdot)$.
We say that the function $f(\cdot)$ is generalized smooth with the parameter $C_{l}>0$ if for all $w \in \reals^{d}$, one has:
$f(w) - f(w_*) \leq C_l h(w,w_*),$  
where $h(w,w_*): \reals^{d} \times \reals^{d} \rightarrow \reals_{+}$ is a non-negative function whose inputs are $w$ and $w_{*}$.
\end{definition}
We see that if a function $f(\cdot)$ is $L$-smooth w.r.t.~a norm $\| \cdot \|$, then it is
$\frac{L}{2}$-generalized smooth w.r.t.~the square norm, i.e., $h(w,w_*) = \| w - w_* \|^2$. 
\begin{lemma} \label{lem:key}
If $f(\cdot)$ is $C_{v}$-generalized variational coherent and $C_{l}$-generalized smooth w.r.t. the same non-negative function $h(\cdot,\cdot)$, then the function satisfies $\rho$-quasar convexity with $\rho = \frac{C_v}{C_l}$.
\end{lemma}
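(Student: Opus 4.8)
The plan is to chain the two hypotheses together directly; no auxiliary construction is needed. The first step is to rewrite the target conclusion. Using $\langle \nabla f(w), w_* - w \rangle = -\langle \nabla f(w), w - w_* \rangle$, the defining inequality of $\rho$-quasar convexity, $f(w_*) \geq f(w) + \frac{1}{\rho}\langle \nabla f(w), w_* - w\rangle$, rearranges to the equivalent form
$$
f(w) - f(w_*) \;\leq\; \tfrac{1}{\rho}\,\langle \nabla f(w), w - w_* \rangle \qquad \text{for all } w \in \reals^{d}.
$$
So it suffices to prove this reformulated inequality with $\rho = C_v / C_l$.

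The second step is to bound each side by the common function $h(\cdot,\cdot)$. By $C_l$-generalized smoothness, $f(w) - f(w_*) \leq C_l\, h(w,w_*)$. By $C_v$-generalized variational coherence, $\langle \nabla f(w), w - w_*\rangle \geq C_v\, h(w,w_*)$; since $C_v > 0$ this rearranges to $h(w,w_*) \leq \frac{1}{C_v}\langle \nabla f(w), w - w_* \rangle$. Substituting the latter into the former yields
$$
f(w) - f(w_*) \;\leq\; C_l\, h(w,w_*) \;\leq\; \tfrac{C_l}{C_v}\,\langle \nabla f(w), w - w_* \rangle,
$$
which is exactly the reformulated inequality with $\frac{1}{\rho} = \frac{C_l}{C_v}$, i.e. $\rho = C_v/C_l$, completing the proof.

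There is essentially no obstacle here; what remains is only bookkeeping. One must check that the same nonnegative $h$ appears in both hypotheses so the middle term passes through cleanly, that $h \geq 0$ and $C_v, C_l > 0$ so none of the inequalities flips sign, and that the rearrangement of the quasar-convexity definition is correct. It is worth noting that the argument never uses $L$-smoothness of $f$ as such — only the two generalized conditions — and that $\rho$ may be larger or smaller than $1$ depending on the ratio $C_v/C_l$, consistent with the generality of the quasar-convexity definition.
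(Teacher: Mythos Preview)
Your proof is correct and matches the paper's own argument essentially line for line: the paper also chains $f(w)-f(w_*)\leq C_l\,h(w,w_*)\leq \tfrac{C_l}{C_v}\langle \nabla f(w), w-w_*\rangle$ directly from the two definitions.
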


\begin{proof}
Using the definitions, we have
$f(w) - f(w_*) \leq C_l h(w,w_*) \leq \frac{C_l}{C_v}
\langle \nabla f(w), w - w_* \rangle.$
\end{proof}
Lemma~\ref{lem:key} could be viewed as a modified result of
Lemma 5  
in~\citet{FSS18},
where the authors show that a GLM with the link function having a bounded second derivative and a positive first derivative satisfies quasar convexity. 
In the following, we provide three more examples of quasar convexity, while the proofs are deferred to Appendix~\ref{app:app}. For these examples, we assume that each sample $x \in \reals^{d}$ is i.i.d.~from a distribution $\mathcal{D}$, and that there exists a $w_{*} \in \reals^{d}$ such that its label is generated as $y = \sigma( w_*^\top x)$, where $\sigma(\cdot): \reals \rightarrow \reals$ is the link function of a GLM. We consider minimizing the square loss function:
\begin{equation} \label{eq:LD}
\textstyle
f(w):=\E_{x \sim \mathcal{D}}\left[ \frac{1}{2} \left( \sigma(w^\top x) - y  \right)^2   \right].
\end{equation} 

\subsubsection{Example 1: (GLMs with increasing link functions)}
\begin{lemma} \label{lem:gel}
Suppose that the link function $\sigma(z)$ is $L_{0}$-Lipschitz and $\alpha$-increasing, i.e., $\sigma'(z) \geq \alpha > 0$ for all $z > \reals$.
Then, the loss function \myeqref{eq:LD} is 
$\alpha^{2}$-generalized variational coherent
and $\frac{L_0^2}{2}$-generalized smooth
 w.r.t.~$h(w,w_*)= \E_{x \sim \mathcal{D}}\left[
( (w - w_*)^\top  x )^2 
\right]
$. Therefore, it is $\rho= \frac{2\alpha^2}{L_0^2}$-quasar convex.
\end{lemma}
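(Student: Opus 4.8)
The plan is to verify the two ``generalized'' properties claimed in the statement directly from the definition of $f$ in~\myeqref{eq:LD}, and then to read off $\rho$-quasar convexity from Lemma~\ref{lem:key}. A preliminary observation that I would make first: the realizability assumption $y = \sigma(w_*^\top x)$ forces $f(w_*) = 0$, and since $f \geq 0$ everywhere, $w_*$ is indeed a global minimizer and $f(w) - f(w_*) = f(w)$ for all $w$. I would also record once and for all that $h(w,w_*) = \E_{x\sim\mathcal D}[((w-w_*)^\top x)^2]$ is assumed finite (a square-integrability hypothesis on $\mathcal D$ is implicit in the statement), so all expectations below are well defined.

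For the generalized smoothness bound, I would write $f(w) = \tfrac12\,\E_x\big[(\sigma(w^\top x) - \sigma(w_*^\top x))^2\big]$ and apply $L_0$-Lipschitzness of $\sigma$ pointwise in $x$, namely $|\sigma(w^\top x) - \sigma(w_*^\top x)| \leq L_0\,|(w-w_*)^\top x|$. Squaring and taking the expectation gives $f(w) \leq \tfrac{L_0^2}{2}\,\E_x[((w-w_*)^\top x)^2] = \tfrac{L_0^2}{2}\,h(w,w_*)$, i.e.\ $C_l$-generalized smoothness with $C_l = L_0^2/2$.

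For the generalized variational coherence bound, I would first compute $\nabla f(w) = \E_x\big[(\sigma(w^\top x) - \sigma(w_*^\top x))\,\sigma'(w^\top x)\,x\big]$ (interchange of gradient and expectation justified by the Lipschitz bound together with the second-moment assumption on $x$), so that $\langle \nabla f(w), w - w_*\rangle = \E_x\big[(\sigma(w^\top x) - \sigma(w_*^\top x))\,\sigma'(w^\top x)\,(w-w_*)^\top x\big]$. The key pointwise step is the mean value theorem: for each $x$ there is $\xi_x$ between $w_*^\top x$ and $w^\top x$ with $\sigma(w^\top x) - \sigma(w_*^\top x) = \sigma'(\xi_x)\,(w-w_*)^\top x$. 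Using $\sigma'(\xi_x) \geq \alpha$ and $\sigma'(w^\top x) \geq \alpha$, the integrand is at least $\alpha^2\,((w-w_*)^\top x)^2 \geq 0$; taking the expectation yields $\langle \nabla f(w), w - w_*\rangle \geq \alpha^2\,h(w,w_*)$, i.e.\ $C_v$-generalized variational coherence with $C_v = \alpha^2$.

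Finally I would invoke Lemma~\ref{lem:key} with $C_v = \alpha^2$ and $C_l = L_0^2/2$ with respect to the common non-negative function $h(w,w_*) = \E_x[((w-w_*)^\top x)^2]$, which gives $\rho$-quasar convexity with $\rho = C_v/C_l = 2\alpha^2/L_0^2$, as claimed. I expect no serious obstacle here: the only delicate points are the measure-theoretic bookkeeping (finiteness of $h$ and differentiation under the expectation sign), both of which follow from the Lipschitz bound on $\sigma$ together with the standing square-integrability assumption on the data distribution, so they are routine rather than substantive.
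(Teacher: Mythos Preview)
Your proof is correct and essentially identical to the paper's: the smoothness step is verbatim the same Lipschitz bound, and for variational coherence the paper writes the difference quotient $\frac{\sigma(w^\top x)-\sigma(w_*^\top x)}{(w-w_*)^\top x}\geq \alpha$ in place of your mean value theorem step, which is the same argument in slightly different notation. The final appeal to Lemma~\ref{lem:key} matches as well.
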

An example of the link functions that satisfy the assumption 
is the Leaky-ReLU, i.e., $\sigma(z) = \max( \alpha z, z )$, where $\alpha >0$.
If we further assume that the models $w$, $w_{*}$, and the features $x$ in \myeqref{eq:LD} have finite length so that the input to the link function $\sigma(\cdot)$ is bounded,
then 
the logistic link function, i.e., $\sigma(z) = (1+\exp(-z))^{{-1}}$, is another example.

\subsubsection{Example 2: (Phase retrieval)}

When the link function is quadratic, i.e., $\sigma(z)= z^{2}$, 
the objective function becomes that of phase retrieval,
see e.g., \citet{YY20,YLC19}. 
\citet{WSW16}, \citet{YY20} show that in the neighborhood of the global minimizers $\pm w_{*}$, the function satisfies one-point convexity in terms of the $l_{2}$ norm when the data distribution $\mathcal{D}$ follows a Gaussian distribution,
for which a specialized initialization technique called the spectral initialization 
finds a point in the neighborhood \citep{MWCC20}.
As discussed earlier, one-point convexity is equivalent to generalized coherence w.r.t.~the square norm, i.e., $h(w,w_*)= \| w - w_* \|^{2}_{2}$. Therefore, by Lemma~\ref{lem:key}, to show quasar convexity for all $w$ in the neighborhood of $\pm w_{*}$, it remains to show that the objective function is generalized smooth w.r.t.~the square norm $\| w - w_* \|^{2}_{2}$.

\begin{lemma} \label{lem:phase}
Assume that there exists a finite constant $C_R>0$ such
that all $w \in \reals^{d}$ in the balls of radius $R$ centered at $\pm w_{*}$ satisfy
$\E_{x \sim D} \left[  \left( (w + w_*)^\top x \right)^2 \| x \|^2_2 \right] \leq C_R$. 
Then, the loss function \myeqref{eq:LD} is 
$\frac{1}{2} C_R$-generalized smooth
 w.r.t.~$h(w,w_*)= \| w -w_*\|^2_{2}$.
 \end{lemma}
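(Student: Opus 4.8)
The plan is to exploit the product structure of the phase-retrieval residual. First I would observe that since the label is generated as $y = \sigma(w_*^\top x) = (w_*^\top x)^2$, the minimizer satisfies $f(w_*) = 0$, so establishing $\tfrac12 C_R$-generalized smoothness w.r.t.\ $h(w,w_*) = \|w - w_*\|_2^2$ reduces to showing the single inequality $f(w) \le \tfrac{1}{2} C_R \|w - w_*\|_2^2$ for every $w$ in the radius-$R$ ball around $w_*$, and likewise around $-w_*$.

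The key step is the algebraic factorization $(w^\top x)^2 - (w_*^\top x)^2 = \big((w - w_*)^\top x\big)\big((w + w_*)^\top x\big)$, which rewrites the objective \myeqref{eq:LD} as
\[
f(w) = \tfrac{1}{2}\,\E_{x \sim \mathcal{D}}\!\left[\big((w - w_*)^\top x\big)^2 \big((w + w_*)^\top x\big)^2\right].
\]
Applying the pointwise Cauchy--Schwarz bound $\big((w - w_*)^\top x\big)^2 \le \|w - w_*\|_2^2\,\|x\|_2^2$ to the first factor and pulling the deterministic quantity $\|w - w_*\|_2^2$ out of the expectation yields
\[
f(w) \le \tfrac{1}{2}\,\|w - w_*\|_2^2\,\E_{x \sim \mathcal{D}}\!\left[\|x\|_2^2 \big((w + w_*)^\top x\big)^2\right] \le \tfrac{1}{2}\,C_R\,\|w - w_*\|_2^2,
\]
where the last inequality is precisely the hypothesis of the lemma. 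Together with $f(w_*) = 0$ this gives the claimed generalized smoothness, and the same chain of inequalities applies verbatim on the ball around $-w_*$ because the hypothesis is assumed on both balls. Combined with Lemma~\ref{lem:key} and the one-point-convexity facts quoted from \citet{WSW16,YY20}, this delivers quasar convexity of the phase-retrieval loss on the neighborhoods of $\pm w_*$.

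I do not expect a genuine obstacle here; the only point requiring care is to apply Cauchy--Schwarz to the $(w - w_*)^\top x$ factor rather than to $(w + w_*)^\top x$, so that the bound emerges with the leading factor $\|w - w_*\|_2^2$ and the residual expectation is exactly the one the hypothesis controls. It is also worth explaining in the proof why the assumption is posed over both balls centered at $\pm w_*$: near $-w_*$ the factor $(w + w_*)^\top x$ is small so $\E[\|x\|_2^2((w + w_*)^\top x)^2]$ is automatically benign, whereas near $+w_*$ it is of order $\E[\|x\|_2^2 (w_*^\top x)^2]$; finiteness of $C_R$ in both regimes holds, e.g., under a sub-Gaussian $\mathcal{D}$ together with bounded $\|w_*\|_2$, and should be noted as the mild regularity that makes the constant well defined.
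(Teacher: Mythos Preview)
Your proposal is correct and follows essentially the same route as the paper: factor $(w^\top x)^2 - (w_*^\top x)^2 = \big((w-w_*)^\top x\big)\big((w+w_*)^\top x\big)$, apply Cauchy--Schwarz to the $(w-w_*)^\top x$ factor, and invoke the hypothesis to bound the remaining expectation by $C_R$. The paper's proof is just these three lines; your added remarks about $f(w_*)=0$, the $-w_*$ ball, and the link to Lemma~\ref{lem:key} are correct elaborations but not part of the paper's argument.
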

An example of the distribution $\mathcal{D}$ that satisfies the assumption in Lemma~\ref{lem:phase} is a Gaussian distribution.

\subsubsection{Example 3: (Learning a single ReLU)}

When the link function is ReLU, i.e., $\sigma(z)= \max\{0,z \}$, 
Theorem 4.2 in \citet{YS20} shows that under mild assumptions of the data distribution, e.g., $\mathcal{D}$ is a Gaussian, the objective function is one-point convex in terms of the $l_{2}$ norm.
Therefore, as the case of phase retrieval,
it remains to show generalized smoothness w.r.t.~the square norm $\| w -w_*\|^2_{2}$ for showing quasar convexity.
\begin{lemma}  \label{lem:ReLU}
When the link function is ReLU, the loss function \myeqref{eq:LD} is 
$\frac{1}{2} \E_{x\sim D}[\| x \|^2_2 ]$-generalized smooth
 w.r.t.~$h(w,w_*)= \| w -w_*\|^2_{2}$.
\end{lemma}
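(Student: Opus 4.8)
The plan is to exploit the fact that the ReLU link is $1$-Lipschitz together with the realizability of the labels, so that the claimed inequality reduces to a one-line application of Cauchy--Schwarz. First I would observe that under the generative model $y = \sigma(w_*^\top x)$ we have $f(w_*) = \E_{x \sim \mathcal{D}}\left[ \frac12 (\sigma(w_*^\top x) - y)^2 \right] = 0$, hence $f(w) - f(w_*) = f(w) = \E_{x\sim\mathcal{D}}\left[ \frac12 (\sigma(w^\top x) - \sigma(w_*^\top x))^2 \right]$, and it suffices to bound this expectation.

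Next I would use that $\sigma(z) = \max\{0,z\}$ satisfies $|\sigma(a) - \sigma(b)| \le |a-b|$ for all $a,b \in \reals$, which gives the pointwise (in $x$) estimate $(\sigma(w^\top x) - \sigma(w_*^\top x))^2 \le (w^\top x - w_*^\top x)^2 = ((w-w_*)^\top x)^2$. Applying Cauchy--Schwarz to the right-hand side then yields $((w-w_*)^\top x)^2 \le \|w - w_*\|_2^2 \, \|x\|_2^2$.

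Finally I would take the expectation over $x \sim \mathcal{D}$ and pull out the deterministic factor $\|w - w_*\|_2^2$, obtaining $f(w) - f(w_*) \le \frac12 \|w - w_*\|_2^2 \, \E_{x\sim\mathcal{D}}[\|x\|_2^2]$, which is exactly $C_l$-generalized smoothness w.r.t.\ $h(w,w_*) = \|w - w_*\|_2^2$ with $C_l = \frac12 \E_{x\sim\mathcal{D}}[\|x\|_2^2]$, as claimed.

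There is essentially no obstacle; the computation uses only Lipschitzness of $\sigma$ (no convexity or one-point convexity of $f$ is needed at this stage). The only point worth flagging is the role of this lemma in context: it is the generalized-smoothness half needed to invoke Lemma~\ref{lem:key}, and it is combined with the one-point convexity statement of Theorem~4.2 in \citet{YS20} (which supplies the generalized variational coherence half w.r.t.\ the same $h$) to conclude quasar convexity in the relevant neighborhood — but the lemma itself is just the Lipschitz-plus-Cauchy--Schwarz bound above.
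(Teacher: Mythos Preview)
Your proposal is correct and follows essentially the same argument as the paper: write $f(w)-f(w_*)=\E_x[\tfrac12(\sigma(w^\top x)-\sigma(w_*^\top x))^2]$, use $1$-Lipschitzness of ReLU to bound by $\E_x[\tfrac12((w-w_*)^\top x)^2]$, apply Cauchy--Schwarz, and pull out $\|w-w_*\|_2^2$. There is nothing to add.
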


\subsection{Examples of strong quasar convexity} \label{stc}

In this subsection, we switch to investigate strong quasar convexity.
We establish its connections to one-point convexity, the PL condition, and the QG condition. 

\subsubsection{One-point convex functions with quasar convexity}

It turns out that if a $C_{v}$-one-point convex function $f(\cdot)$
satisfies $\rho$-quasar convexity, then it also satisfies strong quasar convexity. 
Specifically, we have the following lemma.
\begin{lemma} \label{lem:one}
Suppose that the function $f(\cdot)$ satisfies
$C_{v}$-one-point convexity and $\hat{\rho}$-quasar convexity.
Then, it is also
$\left(\rho = \frac{\hat{\rho}}{\theta}, \mu = \frac{2 C_v (\theta-1)}{\hat{\rho} } \right)$-strongly quasar convex
for any $\theta > 1$.
\end{lemma}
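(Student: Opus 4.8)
The plan is to obtain the claimed strong quasar convexity inequality directly, by splitting the coefficient $\theta/\hat{\rho}$ in front of the gradient term so that the quasar convexity hypothesis supplies the linear (gradient) part and the one-point convexity hypothesis supplies the quadratic part.

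Concretely, unfolding the definition at the stated parameters, the goal is to show that for every $w\in\reals^{d}$,
\[
f(w_*) \;\geq\; f(w) + \frac{\theta}{\hat{\rho}}\,\langle \nabla f(w), w_* - w\rangle + \frac{C_v(\theta-1)}{\hat{\rho}}\,\|w_* - w\|^2 .
\]
First I would write $\frac{\theta}{\hat{\rho}} = \frac{1}{\hat{\rho}} + \frac{\theta-1}{\hat{\rho}}$, which decomposes the right-hand side into the quasar-convex right-hand side $f(w) + \frac{1}{\hat{\rho}}\langle \nabla f(w), w_* - w\rangle$ plus the residual $\frac{\theta-1}{\hat{\rho}}\bigl(\langle \nabla f(w), w_* - w\rangle + C_v\|w_* - w\|^2\bigr)$. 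By $\hat{\rho}$-quasar convexity the first piece is at most $f(w_*)$, so it remains only to verify that the residual is nonpositive.

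Next I would note that $\theta>1$ and $\hat{\rho}>0$ make the prefactor $\frac{\theta-1}{\hat{\rho}}$ strictly positive, so the residual is nonpositive precisely when $\langle \nabla f(w), w - w_*\rangle \geq C_v\|w - w_*\|^2$, i.e.\ exactly the assumed $C_v$-one-point convexity (after the sign flip $\langle \nabla f(w), w_* - w\rangle = -\langle \nabla f(w), w - w_*\rangle$). Adding the two bounds gives the displayed inequality, and I would close by observing that $\mu = \frac{2C_v(\theta-1)}{\hat{\rho}}>0$ since $\theta>1$ and $C_v,\hat{\rho}>0$, so the conclusion is genuine strong quasar convexity. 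I do not expect a real obstacle here: the argument is a one-line combination of the two hypotheses once the coefficient is split correctly; the only care needed is tracking the sign flip and the fact that multiplying the one-point convexity inequality by the positive constant $\frac{\theta-1}{\hat{\rho}}$ preserves its direction.
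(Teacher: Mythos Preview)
Your proof is correct and essentially identical to the paper's: both split the gradient coefficient as $\frac{\theta}{\hat{\rho}} = \frac{1}{\hat{\rho}} + \frac{\theta-1}{\hat{\rho}}$ (the paper writes it as $\frac{1}{\hat{\rho}} = \frac{\theta}{\hat{\rho}} - \frac{\theta-1}{\hat{\rho}}$), apply $\hat{\rho}$-quasar convexity to the $\frac{1}{\hat{\rho}}$ piece, and bound the $\frac{\theta-1}{\hat{\rho}}$ piece via $C_v$-one-point convexity. The only difference is cosmetic---you work forward from the target inequality while the paper chains inequalities starting from quasar convexity---so there is nothing substantive to compare.
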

The proof is deferred to Appendix~\ref{app:3.20}.
By Lemma~\ref{lem:one}, phase retrieval and ReLU regression illustrated in the previous subsection can also be strongly quasar convex.

\subsubsection{Polyak-\L{}ojasiewicz (PL) or Quadratic-growth (QG) functions with quasar convexity}

Recall that a function $f(\cdot)$ satisfies $\nu$-QG w.r.t.~a global minimizer $w_{*} \in \reals^{d}$ if $f(w) - f(w_*) \geq \frac{\nu}{2} \| w - w_* \|^{2}$
for some $\nu>0$ and all $w \in \reals^{d}$ \citep{DL18,KNS16}.
Recall also that a function $f(\cdot)$ satisfies $\nu$-PL if 
$2 \nu ( f(w) -f(w_*) )\leq \| \nabla f(w) \|^2$ for some $\nu>0$ and all $w \in \reals^{d}$ \citep{KNS16}. It is known that a $\nu$-PL function satisfies $\nu$-QG, see e.g., Appendix~A in \citet{KNS16}. The notion of PL has been discovered in various non-convex problems recently \citep{ACGS21,OS19,C21,MSS21}. 
We show in Lemma~\ref{lem:PL} below that if a $\nu$-QG function $f(\cdot)$ satisfies quasar convexity, then it also satisfies strong quasar convexity.
\begin{lemma} \label{lem:PL}
Suppose that the function $f(\cdot)$ is $\nu$-QG and $\hat{\rho}$-quasar convex w.r.t.~a global minimizer $w_{*}$. Then, it is also $(\rho= \hat{\rho} \theta  , \mu= \frac{\nu (1-\theta)}{\theta})$-strongly quasar convex for any $\theta < 1$.
\end{lemma}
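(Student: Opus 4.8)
The plan is to follow the same template as the proof of Lemma~\ref{lem:one}: I would start from the $\hat{\rho}$-quasar convexity inequality and peel off a $\theta$-fraction of the objective-value gap $f(w_*)-f(w)$, converting the remaining $(1-\theta)$-fraction into the desired quadratic term via the QG inequality, and then read off the new parameters. Throughout I treat $\theta$ as lying in $(0,1)$, which is forced by the conclusion: $\rho=\hat{\rho}\theta>0$ needs $\theta>0$, and $\mu=\frac{\nu(1-\theta)}{\theta}>0$ needs $\theta<1$.

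The single ingredient I need is the arithmetic identity, valid for every $w$,
\[
f(w_*) - f(w) = \frac{1}{\theta}\bigl(f(w_*) - f(w)\bigr) + \frac{1-\theta}{\theta}\bigl(f(w) - f(w_*)\bigr),
\]
which holds because $\frac{1}{\theta} - \frac{1-\theta}{\theta} = 1$. I would then lower bound the two pieces separately. For the first, multiplying $\hat{\rho}$-quasar convexity, $f(w_*) - f(w) \geq \frac{1}{\hat{\rho}}\langle \nabla f(w), w_* - w\rangle$, by $\frac{1}{\theta}>0$ gives $\frac{1}{\theta}(f(w_*)-f(w)) \geq \frac{1}{\hat{\rho}\theta}\langle \nabla f(w), w_* - w\rangle$. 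For the second, since $\frac{1-\theta}{\theta}>0$, the $\nu$-QG bound $f(w)-f(w_*)\geq \frac{\nu}{2}\|w-w_*\|^2$ yields $\frac{1-\theta}{\theta}(f(w)-f(w_*)) \geq \frac{(1-\theta)\nu}{2\theta}\|w-w_*\|^2$. Adding the two bounds produces
\[
f(w_*) \geq f(w) + \frac{1}{\hat{\rho}\theta}\langle \nabla f(w), w_* - w\rangle + \frac{(1-\theta)\nu}{2\theta}\|w_* - w\|^2,
\]
which is exactly $(\rho,\mu)$-strong quasar convexity with $\rho=\hat{\rho}\theta$ and $\mu=\frac{\nu(1-\theta)}{\theta}$, as claimed.

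I do not expect a genuine obstacle; the only thing to watch is sign bookkeeping — both scalars $\frac{1}{\theta}$ and $\frac{1-\theta}{\theta}$ must be positive, which is precisely the condition $\theta\in(0,1)$, so that multiplying the quasar-convexity and QG inequalities by them preserves the inequality directions. As sanity checks, letting $\theta\to 1$ recovers plain $\hat{\rho}$-quasar convexity ($\mu\to 0$, $\rho\to\hat{\rho}$), and shrinking $\theta$ trades a larger quadratic coefficient $\mu$ for a smaller $\rho$. Finally, since the excerpt already records that every $\nu$-PL function is $\nu$-QG, this QG statement simultaneously covers the PL case highlighted in this subsection, so I would only need to carry out the QG argument above.
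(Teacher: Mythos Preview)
Your proof is correct and follows essentially the same approach as the paper: both arguments split the optimality gap using the parameter $\theta$, apply $\nu$-QG to the $(1-\theta)$ portion to manufacture the quadratic term, and read off $\rho=\hat{\rho}\theta$, $\mu=\frac{\nu(1-\theta)}{\theta}$. The only cosmetic difference is that the paper starts from the inner-product form $\langle \nabla f(w), w-w_*\rangle \geq \hat{\rho}(f(w)-f(w_*))$, splits the right side as $\hat{\rho}\theta + \hat{\rho}(1-\theta)$, and then divides through by $\hat{\rho}\theta$, whereas you start from the function-value side via the identity $1=\tfrac{1}{\theta}-\tfrac{1-\theta}{\theta}$; these are algebraic rearrangements of one another.
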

Lemma~\ref{lem:QG} in the following shows that GLMs with increasing link functions satisfy QG under certain distributions $\mathcal{D}$, e.g., Gaussian, and hence they are strongly quasar convex by Lemma~\ref{lem:PL} and Lemma~\ref{lem:gel}. 

\begin{lemma} \label{lem:QG}
Following the setting of Lemma~\ref{lem:gel}, assume that the smallest eigenvalue of the matrix $\E_{x \sim \mathcal{D}}[ x x^\top]$ satisfies $\lambda_{\min}(\E_{x \sim \mathcal{D}}[ x x^\top]) > 0$. Then, the function \myeqref{eq:LD} is $\alpha^2 \lambda_{\min}( \E_{ x \sim \mathcal{D}}[ x x^\top] )$-QG.
\end{lemma}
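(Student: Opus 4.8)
\textbf{Proof proposal for Lemma~\ref{lem:QG}.}
The plan is to reduce the QG inequality $f(w) - f(w_*) \geq \frac{\nu}{2}\|w - w_*\|^2$ to a pointwise lower bound on the integrand, and then to a minimum-eigenvalue estimate on the resulting quadratic form. First I would note that, in the setting of Lemma~\ref{lem:gel}, the label is $y = \sigma(w_*^\top x)$, so the objective~\myeqref{eq:LD} can be rewritten as $f(w) = \E_{x\sim\mathcal{D}}\left[\frac{1}{2}\left(\sigma(w^\top x) - \sigma(w_*^\top x)\right)^2\right]$, and in particular $f(w_*) = 0$, so $w_*$ is a global minimizer and $f(w) - f(w_*) = f(w)$. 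Hence it suffices to show $f(w) \geq \frac{\alpha^2 \lambda_{\min}(\E[xx^\top])}{2}\|w - w_*\|^2$ for all $w$.

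The key pointwise step exploits that $\sigma$ is $\alpha$-increasing: for any reals $a,b$, writing $\sigma(a) - \sigma(b) = \int_b^a \sigma'(t)\,dt$ and using $\sigma'(t) \geq \alpha > 0$ gives $|\sigma(a) - \sigma(b)| \geq \alpha |a - b|$, and therefore $\left(\sigma(a) - \sigma(b)\right)^2 \geq \alpha^2 (a - b)^2$. Applying this with $a = w^\top x$ and $b = w_*^\top x$ yields, for every sample $x$,
\begin{equation}
\left(\sigma(w^\top x) - \sigma(w_*^\top x)\right)^2 \;\geq\; \alpha^2 \left((w - w_*)^\top x\right)^2.
\end{equation}

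Taking expectation over $x \sim \mathcal{D}$ and using linearity, I would then get
\begin{equation}
2 f(w) \;=\; \E_{x\sim\mathcal{D}}\!\left[\left(\sigma(w^\top x) - \sigma(w_*^\top x)\right)^2\right] \;\geq\; \alpha^2\, (w - w_*)^\top \E_{x\sim\mathcal{D}}[x x^\top]\, (w - w_*) \;\geq\; \alpha^2 \lambda_{\min}\!\left(\E_{x\sim\mathcal{D}}[x x^\top]\right) \|w - w_*\|^2,
\end{equation}
where the last inequality is the Rayleigh-quotient bound $v^\top M v \geq \lambda_{\min}(M)\|v\|^2$ for the symmetric positive semidefinite matrix $M = \E[xx^\top]$ (which is positive definite by the hypothesis $\lambda_{\min}(\E[xx^\top]) > 0$). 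Dividing by $2$ and recalling $f(w_*) = 0$ gives $f(w) - f(w_*) \geq \frac{\nu}{2}\|w - w_*\|^2$ with $\nu = \alpha^2 \lambda_{\min}(\E_{x\sim\mathcal{D}}[xx^\top])$, which is exactly $\nu$-QG. There is no real obstacle here; the only point requiring a little care is the lower bound $\left(\sigma(a)-\sigma(b)\right)^2 \geq \alpha^2(a-b)^2$, which one should derive via the integral form of $\sigma(a)-\sigma(b)$ (or the mean value theorem) rather than assuming differentiability-free Lipschitz-type arguments, and the implicit integrability assumptions on $\mathcal{D}$ needed for the expectations (finite second moments, guaranteed when $\mathcal{D}$ is Gaussian as in the stated example).
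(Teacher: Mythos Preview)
Your proof is correct and follows essentially the same approach as the paper: both use the $\alpha$-increasing property of $\sigma$ to obtain the pointwise bound $(\sigma(w^\top x)-\sigma(w_*^\top x))^2 \geq \alpha^2((w-w_*)^\top x)^2$, take expectations to form the quadratic form $(w-w_*)^\top \E[xx^\top](w-w_*)$, and then apply the minimum-eigenvalue bound. The only cosmetic difference is that the paper writes the first step via the difference quotient $\frac{\sigma(a)-\sigma(b)}{a-b}\geq \alpha$ whereas you use the integral/mean-value form, which is the same thing.
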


The proofs
of Lemma~\ref{lem:PL} and Lemma~\ref{lem:QG}
 are available in Appendix~\ref{app:3.2}.
\section{Main results: algorithmic aspects} \label{Sec:Main}


We first analyze the continuized Nesterov acceleration \myeqref{dxdz} and its discrete-time version
\myeqref{g1}-\myeqref{g3} for minimizing quasar convex functions.

\begin{theorem}  \label{thm:quasar}
Assume that the function $f(\cdot)$ is $L$-smooth and $\rho$-quasar convex.
Let $\eta_{t}= \frac{2}{\rho t}, \eta'_t = 0 , \gamma_{t}= \frac{1}{L}$, and $\gamma'_{t} = \frac{\rho t}{ 2 L}$.
Then, the update $w_{t}$ of the continuized algorithm \myeqref{dxdz} satisfies
\[
\textstyle
\E[  f( w_t ) - f(w_*) ] \leq \frac{2 L \| z_0 - w_*\|^2}{ \rho^2 t^2 }.
\]
Furthermore, for the update $\tilde{w}_{k}$ of the discrete-time algorithm \myeqref{g1}-\myeqref{g3}, if the parameters are chosen as
$\tau_{k}= 1 - \left( \frac{T_k}{T_{k+1} }  \right)^{2/\rho}$, $\tau'_{k} = 0,  \tilde{\gamma}_k = \frac{1}{L}$, and $\tilde{\gamma}'_k =  \frac{\rho T_k}{2L}$,
then
\[
\textstyle
\E\left[ T_k^2 \left( f( \tilde{w}_k ) - f(w_*) \right) \right]  \leq \frac{2 L \| \tilde{z}_0 - w_*\|^2}{ \rho^2  }.
\]
\end{theorem}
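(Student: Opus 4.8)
The plan is to analyze the continuous-time dynamics using a Lyapunov function, then transfer the result to the discrete-time algorithm via the continuized discretization machinery of Lemma~\ref{thm:even}. For the continuous-time part, I would posit the energy function
\[
\mathcal{E}_t := t^2 \left( f(w_t) - f(w_*) \right) + c \, \| z_t - w_* \|^2
\]
for a constant $c$ to be determined (I expect $c = \tfrac{\rho^2}{2}$ given the target bound), and show that $\E[\mathcal{E}_t]$ is nonincreasing in $t$. This requires computing the drift of $\mathcal{E}_t$ under the mixed ODE/jump dynamics \myeqref{dxdz}. Between jumps, the dynamics are the ODE \myeqref{dxdzd}--\myeqref{dxdzd1} with $\eta_t = \tfrac{2}{\rho t}$ and $\eta'_t = 0$, so $\dot w_t = \tfrac{2}{\rho t}(z_t - w_t)$ and $\dot z_t = 0$; the time derivative of $\mathcal{E}_t$ picks up the explicit $\partial_t(t^2)$ term plus $t^2 \langle \nabla f(w_t), \dot w_t\rangle$. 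At a jump time $T_k$, the expected instantaneous change (rate $1$ Poisson) contributes $t^2[f(w_t - \gamma_t \nabla f(w_t)) - f(w_t)] + c[\|z_t - \gamma'_t \nabla f(w_t) - w_*\|^2 - \|z_t - w_*\|^2]$; here I use $L$-smoothness on the first bracket (with $\gamma_t = 1/L$ this gives a clean $-\tfrac{1}{2L}\|\nabla f(w_t)\|^2$ descent term) and expand the square in the second. The key algebraic cancellation: the ODE part produces a term $\tfrac{2t}{\rho}\langle \nabla f(w_t), z_t - w_t\rangle$ which combines with the jump term $\tfrac{2ct}{L}\langle \nabla f(w_t), w_t - w_* \rangle$ via quasar convexity $\langle \nabla f(w), w_* - w\rangle \le \rho(f(w_*) - f(w))$, and the $2t(f(w_t) - f(w_*))$ term from $\partial_t(t^2)$ must absorb the leftover. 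Choosing $c = \rho^2/2$ makes the coefficients match so that everything collapses to a nonpositive quantity (the $\|\nabla f\|^2$ terms cancel or are negative). Then $\E[\mathcal{E}_t] \le \E[\mathcal{E}_0] = c\|z_0 - w_*\|^2$, and dividing by $t^2$ gives the first claim.

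For the discrete-time bound, I would invoke Lemma~\ref{thm:even}, which says the iterates $\tilde w_k = w_{T_k}$, $\tilde z_k = z_{T_k}$ are exactly the values of the continuized process sampled at the Poisson times, with the stated parameter translations — in particular $\tau_k = 1 - (T_k/T_{k+1})^{2/\rho}$ is precisely what the ODE flow $\eta_t = 2/(\rho t)$ integrates to between $T_k$ and $T_{k+1}$ (since $\dot w = \tfrac{2}{\rho t}(z - w)$ with $z$ constant has closed-form solution $w_{T_{k+1}-} = w_{T_k} + (1 - (T_k/T_{k+1})^{2/\rho})(z_{T_k} - w_{T_k})$), and $\tilde\gamma'_k = \rho T_k/(2L)$ matches $\gamma'_t$ evaluated at the jump. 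Since $\tilde w_k = w_{T_k}$ and $T_k$ is a stopping-time-measurable quantity, the continuous-time inequality $\E[\mathcal{E}_t] \le c\|z_0 - w_*\|^2$ does not directly give the discrete claim because $T_k$ is random; instead I would show directly that the \emph{sampled} energy $T_k^2(f(\tilde w_k) - f(w_*)) + c\|\tilde z_k - w_*\|^2$ is a supermartingale with respect to the filtration generated by $T_1, \dots, T_k$. This follows from the continuous-time computation: $\mathcal{E}_t$ has nonpositive drift along the ODE segments and, in expectation over the exponential inter-arrival time and conditioned on $\mathcal{F}_{k}$, the expected value at the next jump is no larger — more carefully, $\mathcal{E}_t$ itself (not just its expectation) is a supermartingale in continuous time because the jump term, after smoothness, is pathwise $\le$ the compensator contribution that was already subtracted in the drift. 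Evaluating the supermartingale at $k$ versus $k=0$ (where $T_0 = \tilde z_0$'s index, i.e. $t \to 0$ limit, giving $\E[\mathcal{E}_{T_0}] = c\|\tilde z_0 - w_*\|^2$) yields $\E[T_k^2(f(\tilde w_k) - f(w_*))] \le c\|\tilde z_0 - w_*\|^2 / c \cdot c = \tfrac{\rho^2}{2} \cdot \tfrac{2L}{\rho^2}\|\tilde z_0 - w_*\|^2$ — after dividing through by $c = \rho^2/2$ and keeping the factor $\tfrac{2L}{\rho^2}$ straight, this is exactly $\tfrac{2L\|\tilde z_0 - w_*\|^2}{\rho^2}$.

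**The main obstacle** I anticipate is getting the Lyapunov drift computation to close cleanly — specifically, verifying that with the given parameter choices the coefficient of $\langle \nabla f(w_t), z_t - w_*\rangle$ (the cross term that is not controlled by quasar convexity or smoothness) vanishes identically, and that the residual quadratic-in-$\nabla f$ terms from expanding $\|z_t - \gamma'_t \nabla f - w_*\|^2$ and from the smoothness bound $f(w - \tfrac1L \nabla f) \le f(w) - \tfrac{1}{2L}\|\nabla f\|^2$ have the right signs. The choice $\gamma'_t = \rho t/(2L)$ is designed so $c \gamma_t'^2 \|\nabla f\|^2 = \tfrac{\rho^2}{2}\cdot\tfrac{\rho^2 t^2}{4L^2}\|\nabla f\|^2$ is dominated by $t^2 \cdot \tfrac{1}{2L}\|\nabla f\|^2$ precisely when $\tfrac{\rho^2}{4L} \cdot \tfrac{\rho^2}{2}t^2 \cdot \tfrac{1}{L} \le \tfrac{t^2}{2L}$... checking that this holds (using $\rho \le 1$, which follows from quasar convexity) is the kind of bookkeeping that must be done carefully but should go through. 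A secondary subtlety is the rigorous supermartingale argument for the discrete bound: one must be careful that the initial condition is handled correctly at $t = 0$ (the $t^2$ factor degenerates, so $\mathcal{E}_0 = c\|z_0 - w_*\|^2$ with no contribution from the function gap), and that $T_k$ being unbounded does not cause integrability issues — but since $f(\tilde w_k) \ge f(w_*)$ the energy is nonnegative and the supermartingale property plus Fatou suffice.
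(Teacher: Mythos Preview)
Your approach is essentially the same as the paper's: a Lyapunov function of the form $A_t\bigl(f(w_t)-f(w_*)\bigr) + \tfrac{B_t}{2}\|z_t-w_*\|^2$, shown to be a supermartingale by bounding the total drift (ODE part plus compensated jump part) using $L$-smoothness and $\rho$-quasar convexity, then evaluated at the Poisson times $T_k$ via Lemma~\ref{thm:even} and optional stopping.

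The one slip is your constant: you need $c = 2L/\rho^2$, not $\rho^2/2$. Forcing the $\langle\nabla f(w_t), z_t-w_*\rangle$ coefficient to vanish requires $\tfrac{2t}{\rho} = 2c\,\gamma'_t = c\cdot\tfrac{\rho t}{L}$, i.e.\ $c=2L/\rho^2$; with this value the $\|\nabla f\|^2$ coefficient becomes $-\tfrac{t^2}{2L} + c(\gamma'_t)^2 = -\tfrac{t^2}{2L} + \tfrac{2L}{\rho^2}\cdot\tfrac{\rho^2 t^2}{4L^2} = 0$ exactly, so no appeal to ``$\rho\le 1$'' is needed or even helpful (with the wrong $c$ the cross term survives and the argument does not close). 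Moreover $\mathcal{E}_0 = c\,\|z_0-w_*\|^2 = \tfrac{2L}{\rho^2}\|z_0-w_*\|^2$ already, which gives both bounds directly and removes the muddled division at the end of your discrete-time sketch. The paper obtains the same Lyapunov up to an overall scalar (it takes $A_t = \rho^2 t^2/(4L)$, $B_t=1$) by leaving $A_t,B_t$ unspecified and solving the ODEs that make each coefficient in the drift nonpositive.
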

It is noted that the expectation $\E$ is with respect to the Poisson process, which is the only source of randomness in the continuized Nesterov acceleration. 
By applying some concentration inequalities, we can get a bound on the optimal gap with a high probability from Theorem~\ref{thm:quasar}. 
\begin{corollary} \label{cor}
The update $\tilde{w}_{k}$ of the algorithm \myeqref{g1}-\myeqref{g3}
with the same parameters indicated in Theorem~\ref{thm:quasar} satisfies
$f( \tilde{w}_k ) - f(w_*)   \leq \frac{2 c_0 L \| \tilde{z}_0 - w_*\|^2}{ (1-c)^2 \rho^2  k^2 }$, 
with probability at least $1- \frac{1}{c^2 k}  - \frac{1}{c_0}$ for any $c \in (0,1)$ and $c_{0} > 1$.
\end{corollary}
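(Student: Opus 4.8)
The plan is to convert the expectation bound of Theorem~\ref{thm:quasar} into a high‑probability bound by combining Markov's inequality (applied to a cleverly chosen nonnegative random variable) with a Chebyshev‑type lower bound on $T_k$, and then taking a union bound. Since the only randomness is the Poisson process, everything reduces to elementary concentration of $T_k = \sum_{i=1}^{k}\xi_i$, where the $\xi_i$ are i.i.d.\ $\mathrm{Exp}(1)$, so $\E[T_k]=k$ and $\Var(T_k)=k$.

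First I would set $Y_k := T_k^2\bigl(f(\tilde w_k) - f(w_*)\bigr)$. Because $w_*$ is a global minimizer, $f(\tilde w_k) - f(w_*)\ge 0$, so $Y_k\ge 0$, and Theorem~\ref{thm:quasar} gives $\E[Y_k]\le \frac{2L\|\tilde z_0 - w_*\|^2}{\rho^2}$. Markov's inequality then yields, for any $c_0>1$,
\[
\Pr\!\left[\, Y_k \ge c_0\cdot\frac{2L\|\tilde z_0 - w_*\|^2}{\rho^2} \,\right]\le \frac{1}{c_0},
\]
i.e.\ $Y_k \le \frac{2c_0 L\|\tilde z_0 - w_*\|^2}{\rho^2}$ holds with probability at least $1-\frac{1}{c_0}$.

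Next I would lower‑bound $T_k$. By Chebyshev's inequality,
\[
\Pr\!\left[\, T_k \le (1-c)k \,\right]\le \Pr\!\left[\, |T_k - k|\ge ck \,\right]\le \frac{\Var(T_k)}{c^2 k^2}=\frac{1}{c^2 k},
\]
so $T_k^2\ge (1-c)^2 k^2$ with probability at least $1-\frac{1}{c^2 k}$ for any $c\in(0,1)$. Finally, a union bound shows that both events hold simultaneously with probability at least $1-\frac{1}{c_0}-\frac{1}{c^2 k}$, and on that event
\[
(1-c)^2 k^2\bigl(f(\tilde w_k)-f(w_*)\bigr)\le T_k^2\bigl(f(\tilde w_k)-f(w_*)\bigr)\le \frac{2c_0 L\|\tilde z_0 - w_*\|^2}{\rho^2},
\]
which rearranges to the claimed inequality.

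I do not expect a genuine obstacle here; the two things to be careful about are that the nonnegativity required for Markov is exactly what global optimality of $w_*$ provides, and that the two events above need not be independent — but the union bound does not require independence. One could instead use a Chernoff bound for sums of exponentials to make the $T_k$‑lower‑bound failure probability exponentially small in $k$, but the polynomial rate $\frac{1}{c^2 k}$ is precisely what Chebyshev delivers and is what the statement asserts.
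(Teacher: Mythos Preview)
Your proposal is correct and follows essentially the same approach as the paper: apply Markov's inequality to $T_k^2\bigl(f(\tilde w_k)-f(w_*)\bigr)$ using the expectation bound from Theorem~\ref{thm:quasar}, then use Chebyshev's inequality (with $\E[T_k]=\Var(T_k)=k$) to lower-bound $T_k$, and combine via a union bound. Your added remarks on nonnegativity and the lack of need for independence are accurate and make the argument slightly more explicit than the paper's version.
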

Corollary~\ref{cor} implies that $K=O\left( \frac{L^{1/2} \| \tilde{z}_0 - w_* \| }{\rho \epsilon^{1/2}} \right)  $ number of gradient calls is sufficient for the discrete-time algorithm to get an 
$\epsilon$-optimality gap with a high probability,
%
since the discrete-time algorithm only queries one gradient in each iteration $k$.

Next we analyze the convergence rate for minimizing $(\rho,\mu$)-strongly quasar-convex functions.
\begin{theorem}  \label{thm:quasarstr}
Assume that the function $f(\cdot)$ is $L$-smooth and $(\rho,\mu$)-strongly quasar convex, where $\mu>0$.
Let $\gamma_t = \frac{1}{L}$, $\gamma'_{t}= \frac{1}{\sqrt{\mu L} }$, $\eta_{t}'= \rho \sqrt{\frac{\mu}{L} }$, and $\eta_{t}= \sqrt{\frac{\mu}{L} }$.
Then, the update $w_{t}$ of the continuized algorithm \myeqref{dxdz} satisfies
\[
\textstyle
\E[ f( w_t ) - f(w_*) ]  \leq 
\left(  f(w_0) - f(w_*)  + \frac{\mu}{2} \| z_0 - w_* \|^2
\right) \exp\left( - \rho \sqrt{ \frac{\mu}{L} } t \right).
\]
Furthermore, for the update $\tilde{w}_{k}$ of the discrete-time algorithm \myeqref{g1}-\myeqref{g3}, if the parameters are chosen as
$\tau_{k}= \frac{1}{1+\rho} \left( 1 - \exp\left( -(1+\rho) \sqrt{ \frac{\mu}{L} } (T_{k+1} - T_k)  \right) \right) , \tau'_{k} = \frac{ \rho \left( 1 - \exp\left( -(1+\rho) \sqrt{ \frac{\mu}{L} } (T_{k+1} - T_k)  \right) \right) }{ \rho +  \exp\left( -(1+\rho) \sqrt{ \frac{\mu}{L} } (T_{k+1} - T_k)  \right) },\tilde{\gamma}_k = \frac{1}{L}$, and  $\tilde{\gamma}'_k = \frac{1}{\sqrt{\mu L}}$,
then
\[
\textstyle
\E\left[ \exp\left( \rho \sqrt{ \frac{\mu}{L} } T_k \right) \left( f( \tilde{w}_k ) - f(w_*) \right) \right]  \leq 
 f(\tilde{w}_0) - f(w_*)  + \frac{\mu}{2} \| \tilde{z}_0 - w_* \|^2
.
\]
\end{theorem}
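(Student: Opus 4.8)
The plan is to prove Theorem~\ref{thm:quasarstr} by constructing a continuous-time Lyapunov function and showing it is a supermartingale along the continuized dynamics~\myeqref{dxdz}, then transferring the result to the discrete algorithm via Lemma~\ref{thm:even}. Define the candidate energy
\begin{equation}
\mathcal{E}_t = \exp\!\left( \rho \sqrt{\tfrac{\mu}{L}}\, t \right) \left( f(w_t) - f(w_*) + \tfrac{\mu}{2} \| z_t - w_* \|^2 \right).
\end{equation}
The goal is to show $\tfrac{d}{dt}\,\E[\mathcal{E}_t] \le 0$, which immediately yields $\E[\mathcal{E}_t] \le \mathcal{E}_0$ and hence the first displayed bound of the theorem.

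First I would compute the infinitesimal drift of $\mathcal{E}_t$. Because the dynamics~\myeqref{dxdz} is driven by a rate-one Poisson measure $dN(t)$, the expected change of any smooth functional splits into a deterministic part (the ODE flow~\myeqref{dxdzd}--\myeqref{dxdzd1} with the drift terms $\eta_t(z_t-w_t)$ and $\eta'_t(w_t-z_t)$) and a jump part (the expected effect of a GD step~\myeqref{dxdz3}--\myeqref{dxdz4}, which contributes with rate one). For the deterministic part: $\tfrac{d}{dt}f(w_t) = \langle \nabla f(w_t), \eta_t(z_t-w_t)\rangle$ and $\tfrac{d}{dt}\tfrac{\mu}{2}\|z_t-w_*\|^2 = \mu \eta'_t \langle w_t - z_t, z_t - w_*\rangle$, plus the exponential prefactor derivative $\rho\sqrt{\mu/L}\,\mathcal{E}_t / \exp(\cdot)$. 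For the jump part: the expected decrement from a single gradient step at $w_t$ is, to leading order, $-\gamma_t \langle \nabla f(w_t), \nabla f(w_t)\rangle$ from the $f$ term (using $L$-smoothness to control the second-order remainder: $f(w_t - \gamma_t\nabla f) - f(w_t) \le -\gamma_t\|\nabla f(w_t)\|^2 + \tfrac{L\gamma_t^2}{2}\|\nabla f(w_t)\|^2 = -\tfrac{1}{2L}\|\nabla f(w_t)\|^2$ with $\gamma_t = 1/L$) and a cross term $-\mu\gamma'_t\langle \nabla f(w_t), z_t - w_*\rangle$ plus $\tfrac{\mu}{2}(\gamma'_t)^2\|\nabla f(w_t)\|^2$ from the $\|z_t-w_*\|^2$ term. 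Then I would substitute the chosen parameters $\gamma_t = 1/L$, $\gamma'_t = 1/\sqrt{\mu L}$, $\eta'_t = \rho\sqrt{\mu/L}$, $\eta_t = \sqrt{\mu/L}$ and collect terms; the $\|\nabla f(w_t)\|^2$ coefficient becomes $-\tfrac{1}{2L} + \tfrac{\mu}{2}\cdot\tfrac{1}{\mu L} = 0$, which is exactly why $\gamma'_t$ is set to $1/\sqrt{\mu L}$. The crucial inequality then enters: using $(\rho,\mu)$-strong quasar convexity in the form $\langle \nabla f(w_t), w_t - w_*\rangle \ge \rho(f(w_t)-f(w_*)) + \tfrac{\rho\mu}{2}\|w_t - w_*\|^2$, together with dropping the $\|z_t - w_t\|^2$-type nonnegative terms and the $\|w_t - w_*\|^2$ term, I expect the remaining drift to be bounded above by $\rho\sqrt{\mu/L}\,(\cdots) - \rho\sqrt{\mu/L}\,(\cdots) = 0$ after the algebra closes.

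The discrete-time statement then follows by the same supermartingale argument applied between the Poisson jump times, or more directly by invoking Lemma~\ref{thm:even}: the discrete iterates $\tilde w_k = w_{T_k}$, $\tilde v_k = w_{T_{k+1}-}$, $\tilde z_k = z_{T_k}$ are exactly the continuized process sampled at the jump times, so $\mathcal{E}_{T_k}$ is still a supermartingale (now with respect to the filtration generated by $T_1,\dots,T_k$ and the jumps), giving $\E[\mathcal{E}_{T_k}] \le \mathcal{E}_0 = f(\tilde w_0) - f(w_*) + \tfrac{\mu}{2}\|\tilde z_0 - w_*\|^2$. One must also check that the stated forms of $\tau_k$ and $\tau'_k$ are precisely the ones produced by integrating the linear ODE~\myeqref{dxdzd}--\myeqref{dxdzd1} with $\eta_t = \sqrt{\mu/L}$, $\eta'_t = \rho\sqrt{\mu/L}$ over the interval $[T_k, T_{k+1})$ of length $T_{k+1}-T_k$; this is a direct computation of the transition matrix of a $2\times 2$ constant-coefficient linear system whose eigenvalues are $0$ and $-(1+\rho)\sqrt{\mu/L}$, which explains the appearance of $\exp(-(1+\rho)\sqrt{\mu/L}(T_{k+1}-T_k))$ in both parameters.

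The main obstacle I anticipate is getting the drift computation to close \emph{exactly}: the cancellation requires the coefficients of $\|\nabla f(w_t)\|^2$, of $\langle \nabla f(w_t), z_t - w_*\rangle$, and of the cross terms coming from the ODE drift to conspire so that, after applying strong quasar convexity, everything collapses to $-\rho\sqrt{\mu/L}$ times the energy (modulo discarded nonnegative terms). In particular one must be careful that the ODE drift term $\mu\eta'_t\langle w_t - z_t, z_t - w_*\rangle$ combined with the jump cross term $-\mu\gamma'_t\langle \nabla f(w_t), z_t - w_*\rangle$ and the quadratic-growth slack $\tfrac{\rho\mu}{2}\|w_t-w_*\|^2$ from strong quasar convexity are arranged so that no leftover positive term survives — the role of the $\mu$-strong term is exactly to absorb the otherwise-problematic cross term via a completion-of-squares / Cauchy-Schwarz step. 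A secondary, more bookkeeping-level obstacle is verifying the $\tau_k, \tau'_k$ formulas, which amounts to solving $\dot u = M u$ with $M = \begin{pmatrix} -\sqrt{\mu/L} & \sqrt{\mu/L} \\ \rho\sqrt{\mu/L} & -\rho\sqrt{\mu/L}\end{pmatrix}$; this is routine but the precise shape of $\tau'_k$ (a ratio, not just $1-\exp(\cdot)$) comes from the change of variables $\tilde v_k = \tilde w_k + \tau_k(\tilde z_k - \tilde w_k)$ expressing the flow in the $(\tilde w, \tilde z)$ coordinates rather than the eigenbasis, so one has to track that reparametrization carefully.
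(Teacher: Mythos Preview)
Your proposal is correct and follows essentially the same route as the paper. The paper writes the Lyapunov function as $\phi_t = A_t(f(w_t)-f(w_*)) + \tfrac{B_t}{2}\|z_t-w_*\|^2$ with unknown weights, derives five matching conditions (their equation~(\ref{cond})), and then solves to obtain $A_t = A_0\exp(\rho\sqrt{\mu/L}\,t)$, $B_t = \mu A_t$; your $\mathcal{E}_t$ is exactly this with $A_0=1$ substituted from the outset, and your identification of the key cancellations (the $\|\nabla f\|^2$ coefficient vanishing, the cross terms $\langle\nabla f, z_t-w_t\rangle - \langle\nabla f, z_t-w_*\rangle$ collapsing to $-\langle\nabla f, w_t-w_*\rangle$ so that strong quasar convexity applies, and the polarization identity on $\langle z_t-w_*, w_t-z_t\rangle$ absorbing the leftover $\tfrac{\rho\mu}{2}\|w_t-w_*\|^2$) matches the paper's inequalities~(\ref{beq2})--(\ref{beq5}). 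Your treatment of the discrete parameters via the $2\times 2$ linear ODE with eigenvalues $0$ and $-(1+\rho)\sqrt{\mu/L}$, and the observation that the ratio form of $\tau'_k$ comes from the reparametrization $\tau'_k = \tau''_k/(1-\tau_k)$, is also exactly what the paper does in equations~(\ref{90})--(\ref{910}).
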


\begin{corollary} \label{cor2}
The update $\tilde{w}_{k}$ of the algorithm \myeqref{g1}-\myeqref{g3}
with the same parameters indicated in Theorem~\ref{thm:quasarstr} satisfies
$f( \tilde{w}_k ) - f(w_*)  \leq 
c_0 \left(  f(\tilde{w}_0) - f(w_*) + \frac{\mu}{2} \| \tilde{z}_0 - w_* \|^2
\right) \exp\left( - \rho \sqrt{ \frac{\mu}{L} } (1-c) k \right)$,
with probability at least $1- \frac{1}{c^2 k}  - \frac{1}{c_0}$ for any $c \in (0,1)$ and $c_{0} > 1$.
\end{corollary}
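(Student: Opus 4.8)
The plan is to derive Corollary~\ref{cor2} from the second display of Theorem~\ref{thm:quasarstr} by a standard two-step tail argument analogous to the one sketched for Corollary~\ref{cor}. First I would set $X_k := \exp\bigl(\rho\sqrt{\mu/L}\,T_k\bigr)\bigl(f(\tilde w_k)-f(w_*)\bigr)$, which is a non-negative random variable by $L$-smoothness (so that $f(\tilde w_k)\ge f(w_*)$ along the iterates, since $w_*$ is a global minimizer) and whose expectation is bounded by $V := f(\tilde w_0)-f(w_*)+\frac{\mu}{2}\|\tilde z_0 - w_*\|^2$ according to Theorem~\ref{thm:quasarstr}. Applying Markov's inequality to $X_k$ gives $X_k \le c_0 V$ with probability at least $1-\tfrac{1}{c_0}$ for any $c_0>1$; this is the only place the convergence bound of the theorem is used.

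Next I would control the Poisson-clock factor $\exp\bigl(-\rho\sqrt{\mu/L}\,T_k\bigr)$, which converts the bound on $X_k$ into a bound on $f(\tilde w_k)-f(w_*)$. Here $T_k$ is the sum of $k$ i.i.d.\ exponential random variables with mean $1$, so $\E[T_k] = \Var[T_k] = k$. By Chebyshev's inequality, $\Pr[T_k \le (1-c)k] \le \Pr[\,|T_k - k| \ge ck\,] \le \tfrac{\Var[T_k]}{c^2k^2} = \tfrac{1}{c^2 k}$ for any $c\in(0,1)$. On the complementary event $T_k > (1-c)k$ we have $\exp\bigl(-\rho\sqrt{\mu/L}\,T_k\bigr) < \exp\bigl(-\rho\sqrt{\mu/L}\,(1-c)k\bigr)$. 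Combining the two events via a union bound, with probability at least $1 - \tfrac{1}{c^2 k} - \tfrac{1}{c_0}$ both $X_k \le c_0 V$ and $T_k > (1-c)k$ hold simultaneously, hence
\[
\textstyle
f(\tilde w_k) - f(w_*) = \exp\bigl(-\rho\sqrt{\mu/L}\,T_k\bigr)\,X_k \le c_0 V \exp\bigl(-\rho\sqrt{\tfrac{\mu}{L}}\,(1-c)k\bigr),
\]
which is exactly the claimed inequality.

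I do not anticipate a serious obstacle: the argument is a routine Markov-plus-Chebyshev concentration, and the only subtlety worth stating carefully is non-negativity of $X_k$ (needed for Markov's inequality), which follows because $f$ attains its minimum at $w_*$. One minor point to be careful about is the direction of the Poisson-time event: the decaying exponential in the strongly-convex rate means a \emph{small} $T_k$ is the bad case, so I use the lower-tail bound $T_k > (1-c)k$, in contrast to the $O(1/k^2)$ case of Corollary~\ref{cor} where the factor $T_k^2$ in the numerator makes a \emph{small} $T_k$ bad as well; in both cases Chebyshev's two-sided bound $\Pr[|T_k-k|\ge ck]\le 1/(c^2k)$ covers the relevant tail. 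The whole proof is short and I would place it (or at least a sketch) in Appendix~\ref{app:quasar} alongside the proof of Corollary~\ref{cor}.
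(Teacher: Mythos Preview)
Your proposal is correct and follows exactly the same route as the paper's proof: Markov's inequality applied to $\exp\bigl(\rho\sqrt{\mu/L}\,T_k\bigr)\bigl(f(\tilde w_k)-f(w_*)\bigr)$ using the expectation bound from Theorem~\ref{thm:quasarstr}, then Chebyshev on $T_k$ (with $\E[T_k]=\Var[T_k]=k$) to get $T_k\ge(1-c)k$, combined by a union bound. One cosmetic remark: the non-negativity of $X_k$ needed for Markov's inequality comes simply from $w_*$ being a global minimizer, not from $L$-smoothness.
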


The proof of 
the above theorems and corollaries are available in Appendix~\ref{app:quasar}.
Denote~$V:= f(\tilde{w}_0) - f(w_*)  + \frac{\mu}{2} \| \tilde{z}_0 - w_* \|^2$.
Theorem~\ref{thm:quasarstr} and Corollary~\ref{cor2} show that the proposed algorithm takes $O\left( \frac{\sqrt{L/\mu} }{\rho} \log \left( \frac{V}{\epsilon} \right)  \right)  $ number of iterations
with the same number of gradient evaluations to get an $\epsilon$-expected optimality gap
and an $\epsilon$-optimality gap with a high probability respectively.
Together with Theorem~\ref{thm:quasar} and Corollary~\ref{cor}, these theoretical results show that the continuized Nesterov acceleration has an advantage compared to the existing algorithms of minimizing quasar and strongly quasar-convex functions
\citep{HSS20,BM20,NGGP19,GG17},
as it avoids multiple gradient calls in each iteration and does not need
function evaluations to have an $\epsilon$-gap with a high probability.
On the other hand, it should be emphasized that
the guarantees in the aforementioned works are deterministic bounds,
while ours is an expected one or a high-probability bound.

Recall Lemma~\ref{lem:key} suggests that 
$C_v$-one-point convexity and $L$-smoothness
implies $\rho=\frac{2 C_v}{L}$ quasar convexity.
Furthermore, Lemma~\ref{lem:one}
states that $\hat{\rho}$ quasar convexity and $C_v$-one-point convexity 
actually implies 
$\left( \rho= \frac{\hat{\rho}}{\theta}, \mu = \frac{2 C_v (\theta-1)}{ \hat{\rho} }   \right)$-strongly quasar convex for any $\theta>1$.
By combining Lemma~\ref{lem:key} and Lemma~\ref{lem:one}, we find that
$C_v$-one-point convexity and $L$-smoothness 
implies $( \rho= \frac{2 C_v}{L \theta}  , \mu= L(\theta-1)  )$-strong quasar convexity, for any $\theta > 1$.
By substituting $( \rho= \frac{2 C_v}{L \theta}  , \mu= L (\theta-1)  )$ into the complexity
$O\left( \frac{\sqrt{L/\mu}}{\rho}\log \left( \frac{V}{\epsilon}  \right)    \right)$ indicated by Corollary~\ref{cor2},
we see that the required number of iterations
to get an $\epsilon$ gap with high probability
 for minimizing functions that satisfy
$C_v$-one-point convexity and $L$-smoothness via the proposed algorithm is
$O\left( \frac{L}{C_v} \frac{\theta}{\sqrt{\theta-1} }  \log \left( \frac{V}{\epsilon}  \right)    \right)
=
O\left( \frac{L}{C_v} \log \left( \frac{V}{\epsilon}  \right)\right)$, 
where we simply let $\theta=2$.
On the other hand, 
\citet{GGIM22} consider minimizing 
a class of functions that satisfies $C_{v}$-one-point convexity
and a condition called the $L$-upper error bound condition ($L$-EB$^+$).
A function satisfies $L$-EB$^+$ if
$\| \nabla f(w) - \nabla f(w_*) \|_2 \leq L \| w - w_* \|_{2}$ for 
a fixed minimizer $w_{*}$ and any $w \in \reals^{d}$.
\citet{GGIM22}
show that the optimal iteration complexity $k$ 
to have $\frac{ \| w_k - w_* \|^2_2}{ \| w_0 - w_* \|^2_2 } = \hat{\epsilon}$
for minimizing the class of functions via any first-order algorithm
is $ k = \Theta \left( \left( \frac{L}{C_v} \right)^2 \log\left( \frac{1}{\hat{\epsilon}} \right)   \right) $ and that the optimal complexity is simply attained by GD. 
Our result does not contradict to this lower bound result,
because $L$-smoothness and $L$-EB$^+$ are different.
$L$-EB$^+$ is about gradient Lipschitzness between a minimizer $w_*$ and any $w$, not for any pair of points in $\reals^d$, and hence does not imply $L$-smoothness. Also, $L$-smoothness does not imply $L$-EB$^+$.

Lastly, it is noted that both theorems (and corollaries) require the $L$-smoothness condition. 
The reader might raise a question whether the smoothness holds for the case when
a GLM with the link function is ReLU or Leaky-ReLU. For this case, it has been shown that the objective \myeqref{eq:LD} satisfies smoothness when the data distribution is a Gaussian distribution, e.g., Lemma 5.2 in \citet{ZYWG18}. 
\bigskip
\noindent
\\
\textbf{
Continuized accelerated algorithm with stochastic pseudo-gradients for GLMs: }
We also propose a \emph{stochastic} algorithm to recover an unknown GLM $w_{*} \in \reals^{d}$ that generates the label $y$ of a sample $x \in \reals^d$ via $y = \sigma(w_*^\top x)$, where $\sigma(\cdot)$ is the link function.
A natural metric for this task is the distance to the unknown target $w_{*}$, i.e.,
$f(w):= \frac{1}{2} \| w - w_* \|^2_{2}$,
However, since we do not have access to $w_{*}$, we cannot use the gradients of $f(\cdot)$ for the update. Instead, let us consider using stochastic \emph{pseudo-gradients}, defined as
$g(w;\xi) := \left( \sigma(w^\top x) - y \right) x$,
where $\xi:= (x,y)$ represents a random sample drawn from the data distribution.
Assume that the first derivative of the link function is positive, i.e., $\sigma'(\cdot) \geq \alpha > 0$. Then,
the expectation of the dot product between the pseudo-gradient and the gradient $\nabla f(w)$ over the data distribution satisfies
\begin{equation}
\begin{split} \textstyle
\E_{\xi}[ \langle g(w;\xi), \nabla f(w) \rangle ] & =
\E_{\xi}[ \langle \left( \sigma(w^\top x) - y \right) x ,w - w_* \rangle  ]
=\E_{x}[ \langle \left( \sigma(w^\top x) - \sigma(w_*^\top x) \right) x , w - w_* \rangle ]
\\ & \textstyle = 
\E_{x}\left[   \frac{ \left( \sigma(w^\top x) - \sigma(w_*^\top x) \right)  }{ w^\top x - w_*^\top x  }  \left( (w - w_*)^\top x  \right)^2 \right]
\geq \alpha  \E_{x}\left[ \left( (w - w_*)^\top x  \right)^2 \right],
\end{split}
\end{equation}
which implies that taking a negative of pseudo-gradient step should make progress on minimizing the distance $\frac{1}{2} \| w - w_* \|^2_{2}$ on expectation when $w$ has not converged to $w_{*}$.  That is, the update $w_{t+1} = w_{{t}} - \eta g(w_{t};\xi)$ could be shown to converge to the target $w_{*} \in \reals^{d}$ under certain conditions, where $\eta> 0$ is the step size.
In fact, this algorithm is called (stochastic) GLMtron in the literature~\citep{KKSK11}. 
We introduce a continuized acceleration of it in the following. But before that, let us provide some necessary ingredients first. 

Denote a matrix
$H(w):= \E_{x }[ \psi(w^\top x, w_*^\top x) x x^\top ]$,
where $\psi(a,b) := \frac{ \sigma(a) - \sigma(b) }{a-b}$.
When the data matrix satisfies $\E_{x}[ x x^\top ] \succeq \theta I_d$ for some $\theta > 0$ and when the derivative of the link function $\sigma(\cdot)$ satisfies 
$\sigma'(\cdot) \geq \alpha  > 0$, one has $H(w)  \succeq  \mu I_d \succ 0$, 
where $\mu := \alpha \theta$. 
We assume that for any $w \in \reals^{d}$, it holds that
$\E_{x }\left[ \psi(w^\top x, w_*^\top x)^2
\| x \|^2_2   x x^\top \right]
\preceq R^2 H(w)$  
for some constant $R^{2} > 0$, and also that
$\E_{x }\left[ \psi(w^\top x, w_*^\top x)^2
\| x \|^2_{H(w)^{-1}}    x x^\top \right] \preceq \tilde{\kappa} H(w)$
for some constant $\tilde{\kappa}>0$, where we denote $\| x \|^2_{H(w)^{-1}} := x^\top H(w)^{-1} x$ and $H(w)^{-1}$ is the inverse of $H(w)$.
Define $\kappa := \frac{R^2}{\mu}$. Then, we have $\tilde{\kappa} \leq \kappa$, because 
$ \textstyle \E_{x }\left[ \psi(w^\top x, w_*^\top x)^2
\| x \|^2_{H(w)^{-1}}    x x^\top \right]
\preceq \frac{1}{\mu}
\E_{x }\left[ \psi(w^\top x, w_*^\top x)^2 \| x \|^2_2
 x x^\top \right] \preceq \frac{R^2}{\mu} H(w) = \kappa H(w).$
The assumptions
can be viewed as a generalization of the assumptions made in
\citet{JKKNS18,E22} for the standard least-square regression, in which case one has $\sigma(z)=z$ and hence $\psi(\cdot, \cdot)=~1$.

Our continuized acceleration with stochastic pseudo-gradient steps can be formulated as:
\begin{equation} \label{dxdzS}
\begin{aligned} 
\textstyle dw_t & \textstyle = \eta_t ( z_t - w_t ) dt - \gamma_t \int_{\Xi} g(w_t;\xi) d N(t,\xi) 
\\\textstyle  d z_t & \textstyle = \eta'_t (w_t - z_t) dt - \gamma'_t \int_{\Xi} g(w_t;\xi) d N(t,\xi),
\end{aligned}
\end{equation}
where 
$\eta_t,\eta'_t,\gamma_{t},\gamma'_{t}$ are parameters, $\xi \in \Xi$
represents an i.i.d.~random variable associated with a sample used to compute a stochastic pseudo-gradient $g(w;\xi)$, and $dN(t,\xi) = \Sigma_{k\geq 1} \delta_{(T_k, \xi_k)}(dt, d\xi) $ is the Poisson point measure on $\reals_{{\geq 0}} \times \Xi$.
We have Theorem~\ref{thm:accglm} in the following, and its proof is available in Appendix~\ref{app:glm}, where we also provide a convergence guarantee of the discrete-time algorithm.

\begin{theorem}\textbf{(Continuized algorithm~\myeqref{dxdzS} for GLMs)}  \label{thm:accglm}
Choose $\eta_{t}'=  \sqrt{\frac{\mu}{\tilde{\kappa} R^2} }$, $\eta_{t}= \sqrt{\frac{\mu}{\tilde{\kappa} R^2} }$, $\gamma_t = \frac{1}{ R^2}$, and $\gamma'_{t}= \frac{1}{\sqrt{\mu \tilde{\kappa} R^2} }$.
Then, the update $w_{t}$ of \myeqref{dxdzS} satisfies
\[ \textstyle \E\left[ \frac{1}{2} \| w_t - w_* \|^2_2 \right]  \leq 
\frac{1}{2} \left( \| w_0 - w_* \|^2_2 +  \mu \| z_0 - w_* \|^2_{H(w_0)^{-1}}
\right) 
\exp\left( - \sqrt{ \frac{\mu}{\tilde{\kappa} R^2} } t \right).
\]
\end{theorem}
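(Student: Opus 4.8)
The plan is to exhibit a Lyapunov (energy) functional that is an expected supermartingale along the continuized process~\myeqref{dxdzS} and then read off the rate from its exponential prefactor. Concretely, I would work with
\[
\mathcal{E}_t := e^{\lambda t}\Bigl(\tfrac12\|w_t - w_*\|_2^2 + \tfrac{\mu}{2}\,\|z_t - w_*\|_{H(w_t)^{-1}}^2\Bigr), \qquad \lambda := \sqrt{\mu/(\tilde\kappa R^2)},
\]
which at $t=0$ equals exactly the right-hand side of the claimed bound and whose first term lower-bounds $e^{\lambda t}\cdot\tfrac12\|w_t-w_*\|_2^2$. The target is $\mathbb{E}[\mathcal{E}_t]\le \mathcal{E}_0$, which follows once the (jump-)generator of the process applied to $\mathcal{E}$ is non-positive in expectation over the Poisson mark $\xi$.

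I would split the generator into the deterministic drift from the ODE part $dw_t=\eta_t(z_t-w_t)dt$, $dz_t=\eta_t'(w_t-z_t)dt$, and the jump part coming from $dN(t,\xi)$, at which $(w,z)\mapsto(w-\gamma_t g(w;\xi),\,z-\gamma_t' g(w;\xi))$. For the jump part I would use $\mathbb{E}_\xi[g(w;\xi)]=H(w)(w-w_*)$ (immediate from $y=\sigma(w_*^\top x)$ and the definition of $\psi$), so that the terms linear in $g$ collapse to $-\gamma_t (w-w_*)^\top H(w)(w-w_*)-\gamma_t'\mu\langle z-w_*,w-w_*\rangle$, the $H(w)^{-1}$ weight on the $z$-term being precisely what cancels the $H(w)$ produced by $\mathbb{E}_\xi[g]$. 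The terms quadratic in $g$ are $\tfrac{\gamma_t^2}{2}\mathbb{E}_\xi\|g\|_2^2+\tfrac{(\gamma_t')^2\mu}{2}\mathbb{E}_\xi\|g\|_{H(w)^{-1}}^2$; here I would invoke the two structural assumptions $\mathbb{E}_x[\psi^2\|x\|_2^2\,xx^\top]\preceq R^2 H(w)$ and $\mathbb{E}_x[\psi^2\|x\|_{H(w)^{-1}}^2\,xx^\top]\preceq\tilde\kappa H(w)$ to bound both by a multiple of $(w-w_*)^\top H(w)(w-w_*)$, and with $\gamma_t=1/R^2$, $\gamma_t'=1/\sqrt{\mu\tilde\kappa R^2}$ these sum to exactly $\gamma_t(w-w_*)^\top H(w)(w-w_*)$, cancelling the descent term, so the jump contribution is bounded by $-\lambda\langle z-w_*,w-w_*\rangle$.

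For the ODE drift together with the $\lambda\mathcal{E}_t$ term from differentiating the prefactor, I would set $a:=w_t-w_*$, $b:=z_t-w_*$ and collect coefficients: the $\langle a,b\rangle$ cross-terms cancel because $\eta_t=\eta_t'=\lambda$, leaving $+\eta_t\mu\langle b,H(w_t)^{-1}a\rangle$ together with negative multiples of $\|a\|_2^2$ and $\|b\|_{H(w_t)^{-1}}^2$; using $H(w_t)\succeq\mu I_d$ (so $\|a\|_{H(w_t)^{-1}}\le\|a\|_2/\sqrt\mu$) and Cauchy--Schwarz with weight $\sqrt\mu$, that cross-term is absorbed exactly, making the whole generator $\le 0$. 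A standard Dynkin / optional-stopping argument for jump processes then yields $\mathbb{E}[\mathcal{E}_t]\le\mathcal{E}_0$, which is the theorem; the discrete-time version is recovered by evaluating $\mathcal{E}$ at the jump times $T_k$ exactly as in Lemma~\ref{thm:even}.

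The step I expect to be the main obstacle is the dependence of the metric $H(w_t)^{-1}$ on the continuously moving iterate $w_t$: differentiating $\|z_t-w_*\|_{H(w_t)^{-1}}^2$ along the ODE flow produces an extra term $-\tfrac\mu2(z_t-w_*)^\top H(w_t)^{-1}\dot H(w_t)H(w_t)^{-1}(z_t-w_*)$ with $\dot H(w_t)=\mathbb{E}_x[\,\partial_a\psi(w_t^\top x,w_*^\top x)\,(\dot w_t^\top x)\,xx^\top]$, which is not sign-definite in general. I would handle it either by imposing a mild regularity condition on $\psi$ making this term dominated by the negative quadratic slack that remains after the Cauchy--Schwarz step, or by freezing the metric at the most recent jump time so that it is piecewise constant along the flow and updates only at jumps — at the price of an extra bookkeeping term in the jump contribution that must be reabsorbed. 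Everything else is bilinear/quadratic algebra driven by the three assumptions and the specific parameter choices.
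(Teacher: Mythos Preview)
Your Lyapunov function and overall strategy coincide with the paper's: the paper works with $\phi_t = \tfrac{A_t}{2}\|w_t-w_*\|^2 + \tfrac{B_t}{2}\|z_t-w_*\|^2_{H(w_t)^{-1}}$ and solves for $A_t = e^{\lambda t}$, $B_t = \mu e^{\lambda t}$, which is exactly your $\mathcal{E}_t$; the drift and jump estimates are the same, with only cosmetic differences in how the cross terms are absorbed (the paper uses $\tfrac12\|a\|^2 \le \|a\|^2 - \tfrac{\mu}{2}\|a\|^2_{H^{-1}}$ together with $\langle b, H^{-1}(a-b)\rangle \le \tfrac12(\|a\|^2_{H^{-1}} - \|b\|^2_{H^{-1}})$ where you use Cauchy--Schwarz plus $H\succeq\mu I$). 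One small slip in your bookkeeping: the Euclidean cross-term $\eta_t\langle a,b\rangle$ coming from the $w$-drift is cancelled not by $\eta_t=\eta_t'$ but by the jump's contribution $-\mu\gamma_t'\langle a,b\rangle$; the relevant identity is $\eta_t=\mu\gamma_t'=\lambda$, which does hold with the stated parameters.

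On the obstacle you flag --- the $w_t$-dependence of the metric $H(w_t)^{-1}$ generating an extra term along the ODE flow and a metric change at jumps --- the paper's own proof silently ignores it: in the drift computation it treats $H_t^{-1}$ as independent of $w_t$ when evaluating $\partial_w\phi$, and in the jump computation it measures the post-jump $\|z-\gamma_t'g-w_*\|^2_{H^{-1}}$ with the pre-jump $H(w_t)^{-1}$. So your plan matches the paper exactly, including this unresolved point; neither of your suggested fixes (regularity on $\psi$, or freezing the metric between jumps) is carried out there.
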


\section{Experiments}

We compare the proposed continuized acceleration with GD and the accelerated method of \citet{HSS20} (AGD). For the method of \citet{HSS20}, we use their implementation available online \citep{HSS21P}. 
Our first set of experiments
consider optimizing the empirical risks of GLMs with link functions being logistic, ReLU, and quadratic, i.e., solving $\min_{w} \frac{1}{n} \sum_{i=1}^n \left[ \frac{1}{2} \left( \sigma(w^\top x_i) - y_i  \right)^2   \right]$, where $n$ is the number of samples. 
Each data point $x_{i}$ is sampled from the normal distribution $N(0,I_d)$ and the label $y_{i}$ is generated as $y_{i}=\sigma(w_*^\top x_i)$, where $w_{*} \sim N(0,I_d)$ is the true vector and $\sigma(\cdot)$ is the link function. 
In the experiments, we set the number of samples $n=1000$ and the dimension $d=50$. The initial point of all the algorithms $w_{0}\in \reals^{d}$ is a close-to-zero point, and is sampled as $w_{0} \sim 10^{-2} \zeta$, where $\zeta \sim N(0,I_d)$. Since the
continuized acceleration has randomness due to the Poisson process, it was replicated $10$ runs in the experiments, and the averaged results over these runs are reported.
Both the continuized acceleration and AGD of \citet{HSS21P} need the knowledge of $L$, $\rho$, and $\mu$ for setting their parameters theoretically. We instead use the grid search and report the result under the best configuration of these parameters for each method.
More precisely, we search $L$ and $\mu$ over $\{ \dots, 10^{q}, 5 \times 10^{q}, 10^{q+1}, \dots \}$ with the constraint that $L > \mu$, where $q \in \{-2,-1,\dots,4\}$,
and search $\rho \in \{ 0.01, 0.1, 0.5\}$.

Figure~\ref{fig:glms} shows the results, where we compare the performance of the algorithms in terms of the function value versus iteration, the number of gradient calls, and CPU time (seconds). From the first column of the figure, one can see that the proposed continuized acceleration is competitive with AGD of \citet{HSS20} in terms of the number of iterations. From the middle and the last column, the continuized acceleration shows its promising results over AGD and GD when they are measured in terms of the number of gradient calls and CPU time, which confirms that the cost of AGD per iteration is indeed higher than the continuized acceleration and showcases the advantage of the continuized acceleration.
\begin{figure}[t]
\centering
\footnotesize
     \subfloat[Logistic link (x: iteration). \label{subfig-1:dummy}]{%
       \includegraphics[width=0.30\textwidth]{./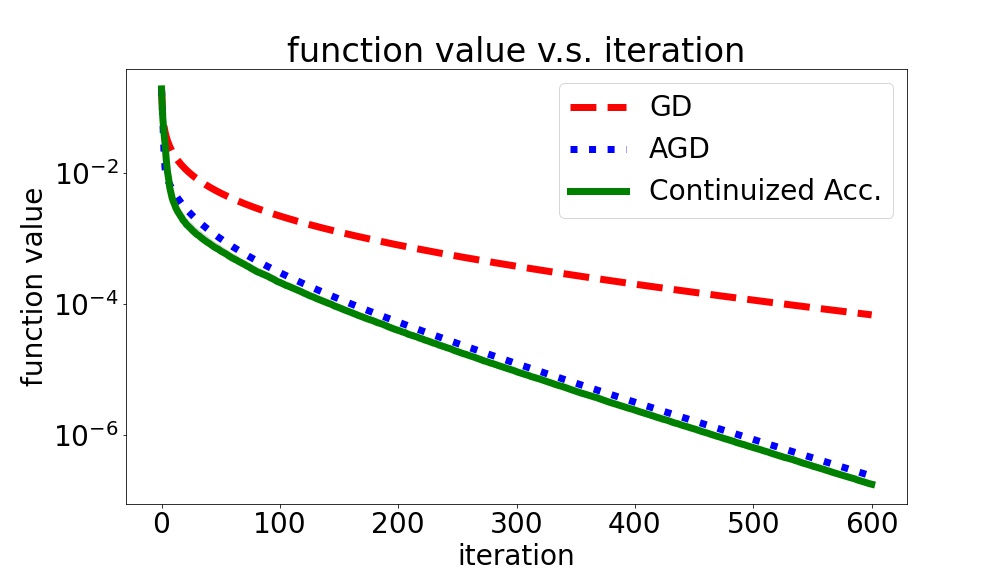}
     } 
     \subfloat[Logistic link (x: \# calls). \label{subfig-1:dummy}]{%
       \includegraphics[width=0.30\textwidth]{./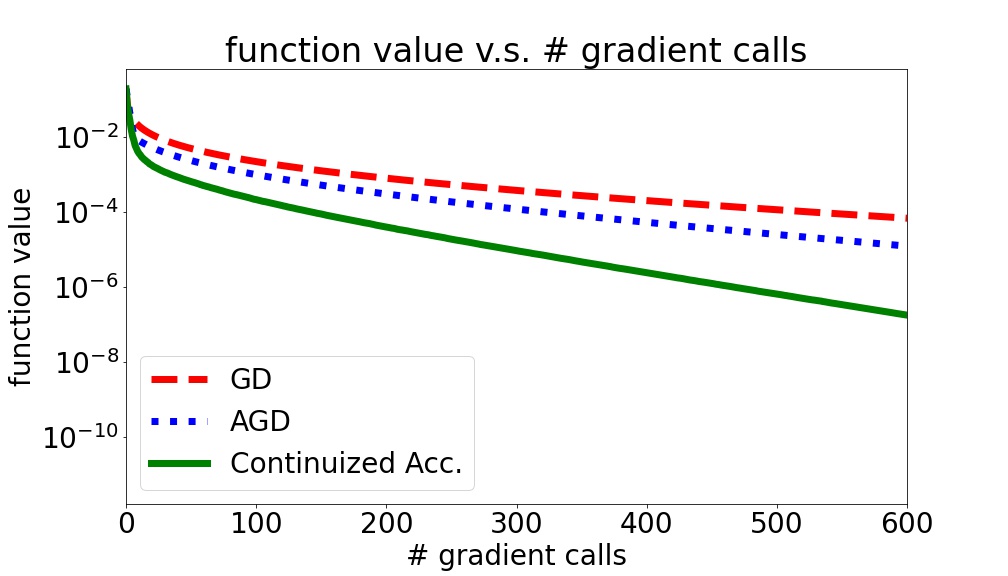}
     } 
     \subfloat[Logistic link (x: time). \label{subfig-1:dummy}]{%
       \includegraphics[width=0.30\textwidth]{./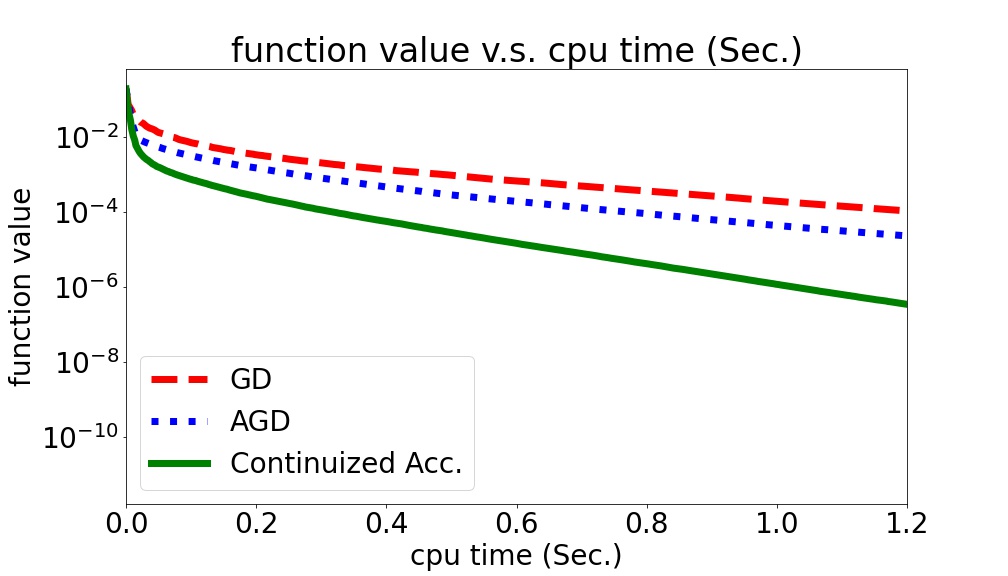}
     } \\
     \subfloat[ReLU link (x: iteration). \label{subfig-1:dummy}]{%
       \includegraphics[width=0.30\textwidth]{./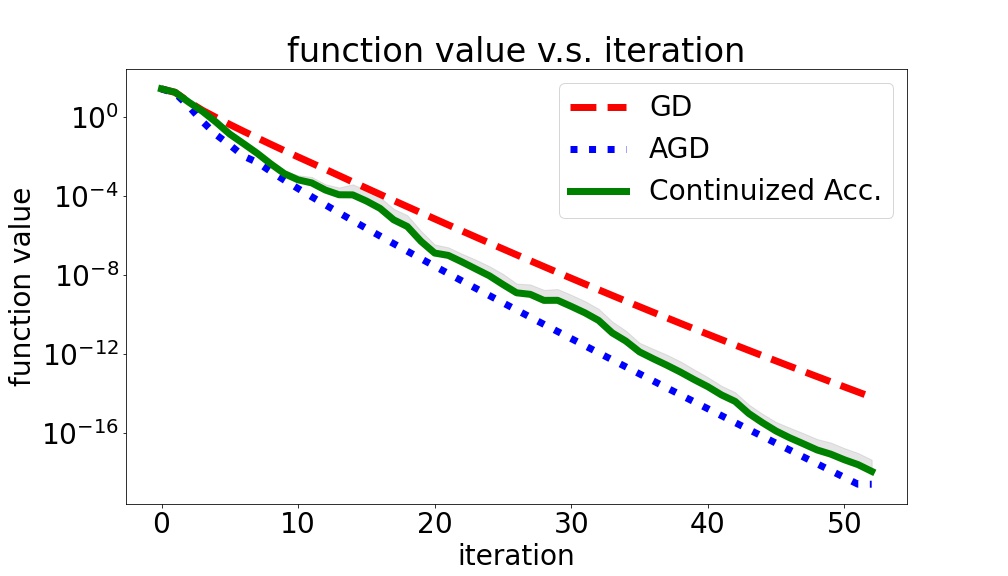}
     } 
     \subfloat[ReLU link (x: \# calls). \label{subfig-1:dummy}]{%
       \includegraphics[width=0.30\textwidth]{./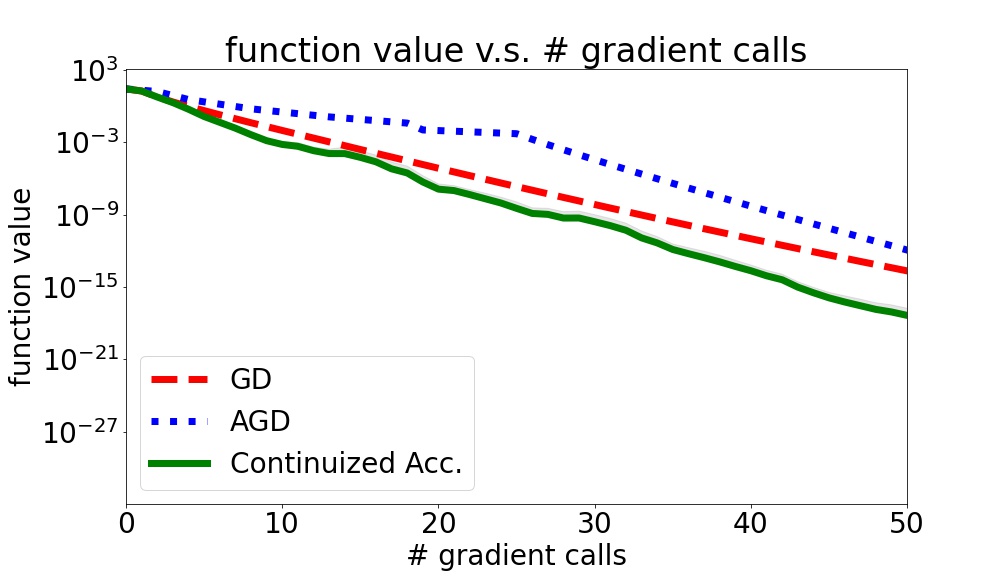}
     } 
     \subfloat[ReLU link (x: time). \label{subfig-1:dummy}]{%
       \includegraphics[width=0.30\textwidth]{./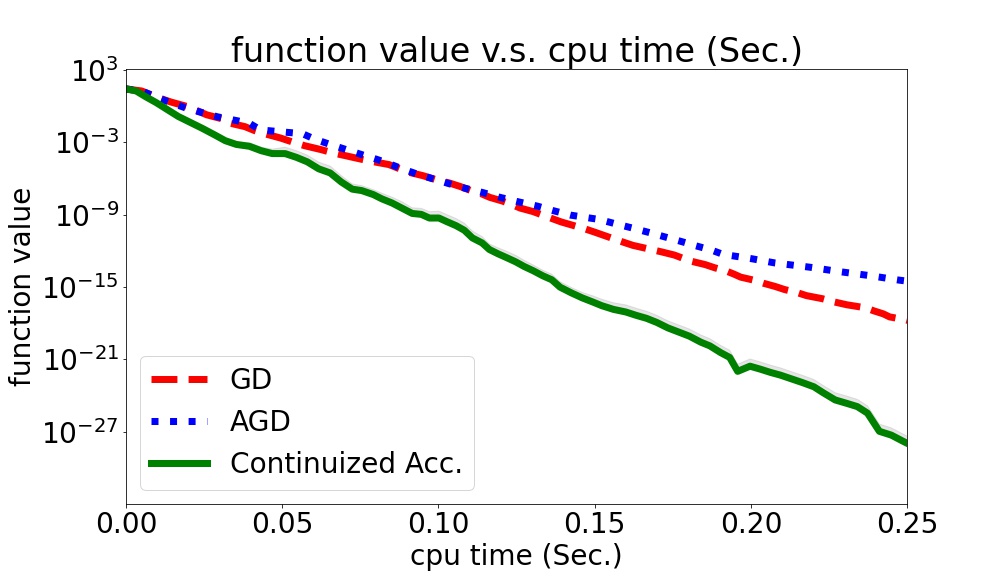}
     } \\
     \subfloat[Quadratic link (x: iteration). \label{subfig-1:dummy}]{%
       \includegraphics[width=0.30\textwidth]{./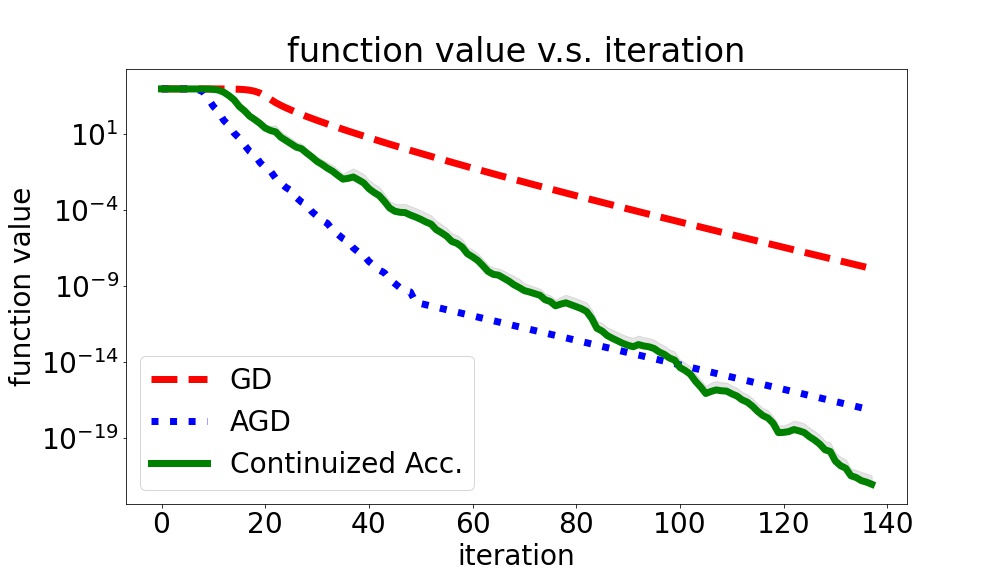}
     } 
     \subfloat[Quadratic link (x: \# calls). \label{subfig-1:dummy}]{%
       \includegraphics[width=0.30\textwidth]{./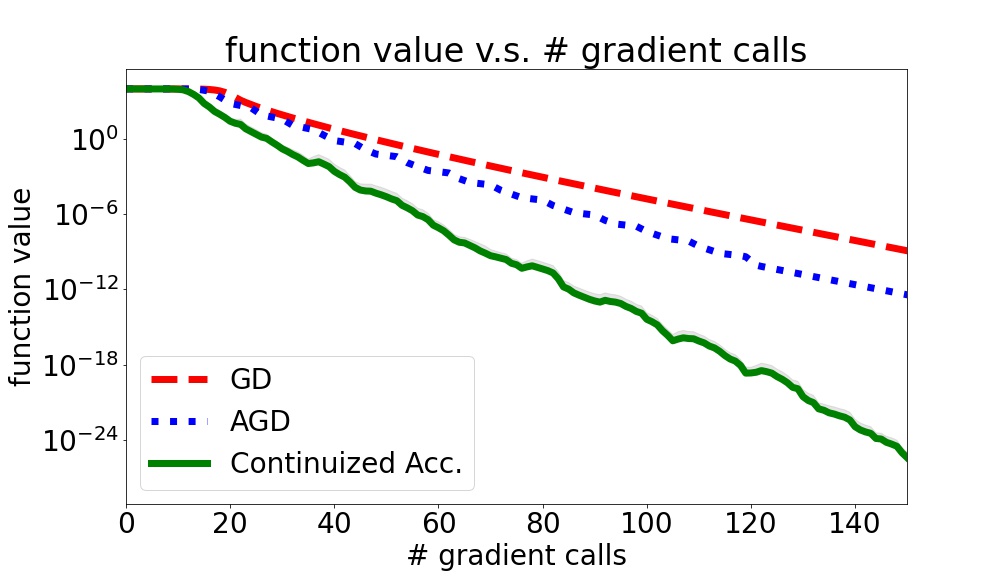}
     } 
     \subfloat[Quadratic link (x: time). \label{subfig-1:dummy}]{%
       \includegraphics[width=0.30\textwidth]{./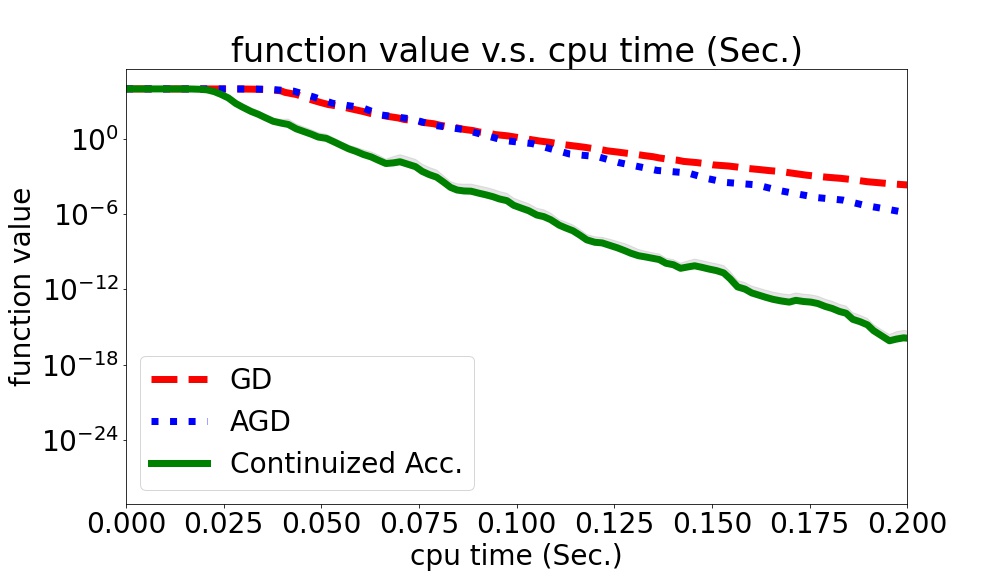}
     } 
     \caption{\footnotesize Comparison of the continuized Nesterov acceleration, GD, and AGD \citep{HSS20}. 
     }
     \vspace{-10pt}
     \label{fig:glms}
\end{figure}
\begin{figure}[t]
\centering
\footnotesize
     \subfloat[Leaky-ReLU link ($\alpha=0.01$). \label{subfig-1:dummy}]{%
       \includegraphics[width=0.3\textwidth]{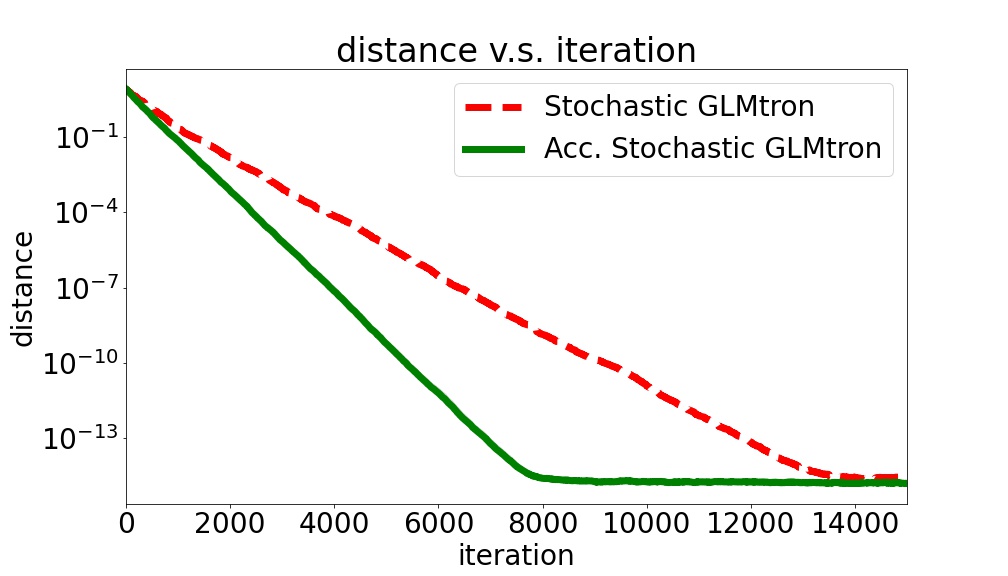}
     } 
     \subfloat[Leaky-ReLU link ($\alpha=0.1$) \label{subfig-1:dummy}]{%
       \includegraphics[width=0.3\textwidth]{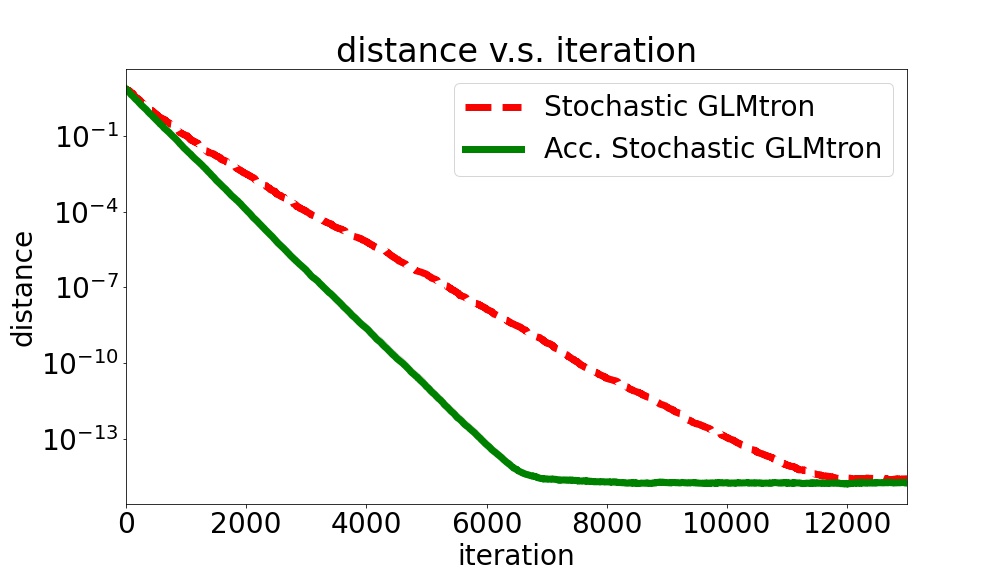}
     } 
     \subfloat[Leaky-ReLU link ($\alpha=0.5$)  \label{subfig-1:dummy}]{%
       \includegraphics[width=0.3\textwidth]{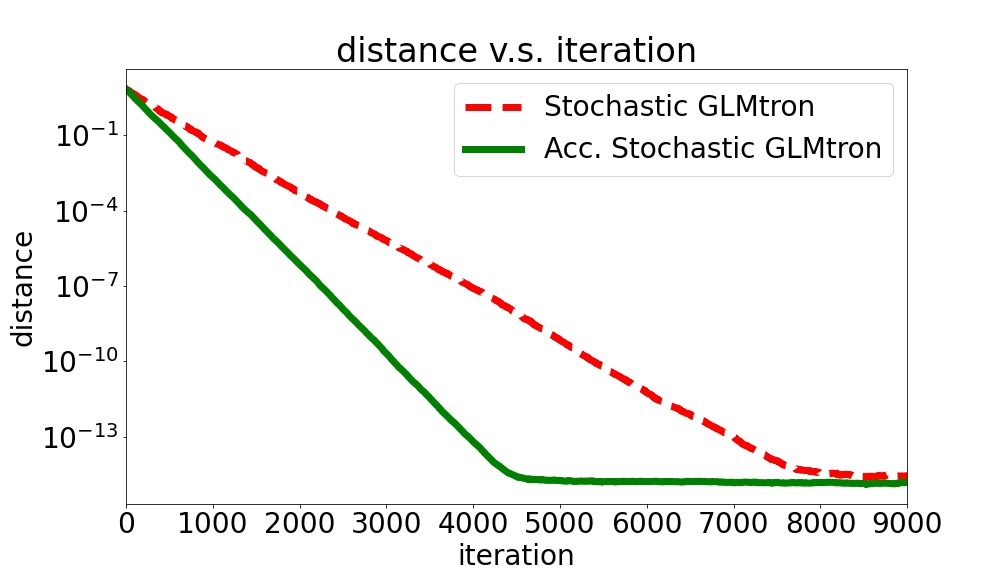}
     } 
     \caption{\footnotesize Distance $\|w_k - w_*\|$ v.s. iteration $k$. 
     \vspace{-12pt}
     }
     \label{fig:stomore}
\end{figure}
Our second set of experiments compare stochastic GLMtron and the proposed continuized acceleration of it (accelerated stochastic GLMtron), in which both algorithms randomly select a sample to compute a stochastic pseudo-gradient at each step of the update.
We consider learning a GLM with a Leaky-ReLU, i.e.,~$\sigma(z) = \max( \alpha z, z )$ under different values of $\alpha$.
Figure~\ref{fig:stomore} shows the effectiveness of accelerated stochastic GLMtron, as it is significantly faster than stochastic GLMtron for recovering the true vector $w_{*}$. 

\section{Conclusion}

We show that the continuized Nesterov acceleration outperforms the previous accelerated methods for minimizing quasar convex functions.
Compared to the previous approaches, the continuized discretization technique provides a relatively easy way to design and analyze an accelerated algorithm for quasar convex functions. 
Hence, it would be interesting to check whether this technique could offer any other benefits in
non-convex optimization. 
Specifically,
can the technique help design fast algorithms for minimizing other classes of non-convex functions? 
On the other hand, while examples of quasar convex functions are provided in this paper, a natural question is if this property holds more broadly in modern machine learning applications. Exploring the possibilities might be another interesting direction.


\subsubsection*{Acknowledgments}

We thank the reviewers for constructive feedback, which helps improve the quality of this paper.

\bibliography{iclr_quasar}
\bibliographystyle{iclr2023_conference}

\appendix

\clearpage
\section{Algorithms of \citet{HSS20}} \label{app:HSS}

We replicate the algorithms in \citet{HSS20} using our notations for the reader's reference. Their algorithms use a subroutine of binary search to determine the ``mixing'' parameter $\tau_{k}$.

\begin{algorithm}[h]
\caption{AGD for $(\rho,\mu)$-strongly quasar convex function minimization in \citet{HSS20}}\label{euclid}
\begin{algorithmic}[1]
\STATE Set $\tau_k = \rho \sqrt{ \frac{\mu}{L} }$, $\tilde{\gamma}_{k}=\frac{1}{L}$, \text{and} $\tilde{\gamma}'_k = \frac{1}{\sqrt{\mu L}}$
\STATE \textbf{for } $k=0,1,\dots, K$ \textbf{do}
\STATE \quad $\alpha_{k} \leftarrow \ \textsc{BinaryLineSearch}\left(f,w_k,z_k, b=\frac{\rho \mu}{2}, c = \sqrt{ \frac{L}{\mu}  }, \tilde{\epsilon}=0   \right) $. 
\STATE \quad $\tau_{k} \leftarrow 1 - \alpha_k $.
\STATE \quad $v_k  = w_k + \tau_k ( z_k - w_k)$.
\STATE \quad $w_{k+1} = v_k - \tilde{\gamma}_{k+1} \nabla f( v_k)$.
\STATE \quad $z_{k+1} = z_k + \tau'_k ( v_k - z_k) - \tilde{\gamma}'_{k+1} \nabla f(v_k)$.
\STATE \textbf{end}
\STATE \textbf{return} $w_K$
\end{algorithmic}
\end{algorithm}

\begin{algorithm}[h]
\caption{AGD for $\rho$-quasar convex function minimization in \citet{HSS20}}\label{euclid}
\begin{algorithmic}[1]
\STATE Set $\tau_k = 0$, $\tilde{\gamma}_{k}=\frac{1}{L}$, \text{and} $\tilde{\gamma}'_k = \frac{\rho}{ L \theta_k }$, where $\theta_k = \frac{\theta_{k-1} }{2} \left( \sqrt{ (\theta_{k-1})^2 + 4 } - \theta_{k-1}  \right)$ for $k \geq 0$ and 
$\theta_{{-1}}=1$.
\STATE \textbf{for } $k=0,1,\dots, K$ \textbf{do}
\STATE \quad $\alpha_{k} \leftarrow \ \textsc{BinaryLineSearch}\left(f,w_k,z_k, b=0, c = \rho \left( \frac{1}{\theta_k}  -1 \right), \tilde{\epsilon}= \frac{\rho \epsilon}{2}   \right) $. 
\STATE \quad $\tau_{k} \leftarrow 1 - \alpha_k $.
\STATE \quad $v_k  = w_k + \tau_k ( z_k - w_k)$.
\STATE \quad $w_{k+1} = v_k - \tilde{\gamma}_{k+1} \nabla f( v_k)$.
\STATE \quad $z_{k+1} = z_k + \tau'_k ( v_k - z_k) - \tilde{\gamma}'_{k+1} \nabla f(v_k)$.
\STATE \textbf{end}
\STATE \textbf{return} $w_K$
\end{algorithmic}
\end{algorithm}

\begin{algorithm}[h]
\caption{\textsc{BinaryLineSearch}($f,w,z,b,c,\tilde{\epsilon}$,[\textit{guess}]) \citep{HSS20}}\label{euclid}
\begin{algorithmic}[1]
\STATE Assumptions: $f$ is $L$-smooth, $w, z \in \reals^{d}$; $b,c,\tilde{\epsilon} \geq 0$; ``\textit{guess}'' (optional) is in $[0,1]$ if provided.
\STATE Define $g(\alpha):= f(\alpha w + (1-\alpha) z)$ and $p:= b \| w - z \|^{2}$.
\STATE \textbf{if} \textit{guess} provided \textbf{and} $cg(\textit{guess}) 
+ \textit{guess} ( g'(\textit{guess}) - \textit{guess} \cdot p )
\leq c g(1) + \tilde{\epsilon}$ \textbf{then return } \textit{guess};
\STATE \textbf{if} $g'(1) \leq \tilde{\epsilon} + p \textbf{ then return } 1$;
\STATE \textbf{else if} $c=0$ \textbf{or} $g(0) \leq g(1) + \frac{\tilde{\epsilon}}{c}$ \textbf{then return } 0;
\STATE $\tau \leftarrow 1 - \frac{ \tilde{\epsilon} +p }{L \| w - z\|^2}$.
\STATE $\textbf{lo} \leftarrow 0,  \textbf{hi} \leftarrow \tau, \alpha \leftarrow \tau$.
\STATE \While{ $c g(\alpha) + \alpha ( g'(\alpha) - \alpha p ) > c g(1) + \tilde{\epsilon} $  }{
\quad 
$\alpha \leftarrow (\textbf{lo} + \textbf{hi}) / 2$ \\
\quad \textbf{if} $g(\alpha) \leq g(\tau)$ \textbf{then} \\ \qquad  $\textbf{hi} \leftarrow \alpha$;\\
\quad \textbf{else} \\ \qquad $\textbf{lo} \leftarrow \alpha$;
}
\STATE \textbf{return} $\alpha$.
\end{algorithmic}
\end{algorithm}

\clearpage

\section{Proof of Lemma~\ref{thm:even}} \label{app:thm:even}

\noindent
\textbf{Lemma~\ref{thm:even}}
\textit{
(Theorem 3 in \citet{E22}) 
The discretization of the continuized Nesterov acceleration \myeqref{dxdz} can be implemented as
$\tilde{w}_{k}:= w_{{T_k}}$, $\tilde{v}_{k}:= w_{{T_{k+1}-}}$, $\tilde{z}_{k}:= z_{{T_k}}$. Furthermore, the update of the discretized process is in the following form:
\begin{align}
\tilde{v}_k & = \tilde{w}_k + \tau_k ( \tilde{z}_k - \tilde{w}_k) \label{gg1}
\\ \tilde{w}_{k+1} &= \tilde{v}_k - \tilde{\gamma}_{k+1} \nabla f(\tilde{v}_k) \label{gg2}
\\ \tilde{z}_{k+1} &= \tilde{z}_k + \tau'_k ( \tilde{v}_k - \tilde{z}_k) - \tilde{\gamma}'_{k+1} \nabla f(\tilde{v}_k), \label{gg3} 
\end{align}
where $\tau_{k},\tau'_{k}, \tilde{\gamma}_{k}, \tilde{\gamma}'_{k}$ are random parameters that are functions of $\eta_{t},\eta'_{t},\gamma_{t}$, and $\gamma'_{t}$.
}

\begin{proof}
We replicate the proof in \citep{E22} for completeness.
Recall between random times, we have the ODEs
\begin{align} 
dw_t & = \eta_t ( z_t - w_t ) dt 
\\ d z_t & = \eta'_t (w_t - z_t) dt.
\end{align}
Integrating from $T_{k}$ to $T_{{k+1-}}$,
\begin{align}
& \tilde{v}_k = w_{T_{{k+1-}}} = w_{T_k} + \tau_k ( z_{T_k} - w_{T_k}  )
= \tilde{w}_k + \tau_k (  \tilde{z}_k  - \tilde{w}_k ) \label{26}
\\ &
z_{T_{k+1-}} = z_{T_k} + \tau''_k ( w_{T_k} - z_{T_k}  ) = \tilde{z}_k + \tau''_k
( \tilde{w}_k - \tilde{z}_k ), \label{29}
\end{align}
where $\tau_{k}$ and $\tau''_{k}$ depend on $\eta_{t}$ and $\eta'_t$ respectively.
Combing the above two equations, we have
\begin{equation} \label{30}
z_{T_{k+1-}} =  \tilde{z}_k + \tau''_k
( \frac{1}{1 - \tau_k} (  \tilde{v}_k - \tau_k \tilde{z}_k ) - \tilde{z}_k )
= \tilde{z}_k + \tau'_k ( \tilde{v}_k - \tilde{z}_k  ),
\end{equation}
where $\tau'_{k} := \frac{\tau''_k }{1-\tau_k}$.
Furthermore, from \myeqref{dxdz3} and \myeqref{dxdz4}, we have
\begin{align} 
\tilde{w}_{k+1} = w_{T_{k+1}} & = w_{T_{k+1-}} - \gamma_{T_{k+1}} \nabla f(w_{T_{k+1-}}  )
= \tilde{v}_k - \gamma_{T_{k+1}} \nabla f(\tilde{v}_k ),
\\
\tilde{z}_{k+1} = z_{T_{k+1}} & = z_{T_{k+1-}} - \gamma'_{T_{k+1}} \nabla f(w_{T_{k+1-}}  )
\overset{\myeqref{30}}{=} \tilde{z}_k + \tau'_k ( \tilde{v}_k - \tilde{z}_k  )
- \gamma'_{T_{k+1}} \nabla f(\tilde{v}_k ).
\end{align}
Hence, $\tilde{\gamma}_{{k+1}} = \gamma_{{T_{k+1}}}$ and $\tilde{\gamma}'_{{k+1}} = \gamma'_{{T_{k+1}}}$.

\end{proof}

\section{Missing proofs in Section~\ref{sec:app} } \label{app:app}

\subsection{Proof of Lemma~\ref{lem:gel}}

\noindent
\textbf{Lemma~\ref{lem:gel}}
\textit{
Suppose that the link function $\sigma(z)$ is $L_{0}$-Lipschitz and $\alpha$-increasing, i.e., $\sigma'(z) \geq \alpha > 0$ for all $z > \reals$.
Then, the loss function \myeqref{eq:LD} is 
$\alpha^{2}$-generalized variational coherent
and $\frac{L_0^2}{2}$-generalized smooth
 w.r.t.~$h(w,w_*)= \E_{x \sim \mathcal{D}}\left[
( (w - w_*)^\top  x )^2 
\right]
$. Therefore, the function \myeqref{eq:LD} is $\rho= \frac{2\alpha^2}{L_0^2}$-quasar convex.
}

\begin{proof}
We first show generalized variational coherence. We have
\begin{equation} \label{eq1}
\begin{split}
\textstyle  \langle \nabla f(w), w - w_* \rangle 
& \textstyle  \overset{(a)}{=}  \E_{x \sim \mathcal{D}}\left[  \left( \sigma( w^\top x ) - \sigma( w_*^\top x ) \right) \sigma'( w^\top x )
\langle w - w_* , x \rangle \right]
\\ & \textstyle  = \E_{x \sim \mathcal{D}}\left[  \left( \frac{ \sigma( w^\top x ) - \sigma( w_*^\top x ) }{ (w-w_*)^\top x  } \right) \sigma'( w^\top x )
( (w - w_*)^\top  x )^2 \right]
\\ & \textstyle  \overset{(b)}{\geq} \alpha^2 \E_{x \sim \mathcal{D}}\left[ ( (w - w_*)^\top  x )^2  \right]
\\ & = \alpha^2 h(w,w_*),
\end{split}
\end{equation}
where (a) uses that $y = \sigma( w_*^\top x )$, (b) uses that
$\frac{  \sigma( w^\top x ) - \sigma( w_*^\top x ) }{ w^\top x - w_*^\top x } \geq 0$ as $\sigma'(\cdot) \geq \alpha >0 $.
Now let us switch to show generalized smoothness. 
We have
\begin{equation} \label{eq2}
\begin{split}
\textstyle f(w) - f(w_*)& \textstyle =
\E_{x \sim D}\left[ \frac{1}{2}  \left( \sigma(w^\top x)  - \sigma(w_*^\top x)  \right)^2  \right]
\\ & \textstyle  \leq \frac{L_0^2}{2} \E_{x \sim D}\left[
 ( (w - w_*)^\top  x )^2 \right] = \frac{L_0^2}{2} h(w,w_*),
\end{split}
\end{equation}
where the inequality is due to $L_{0}$-Lipschitzness of $\sigma(\cdot)$.
We can now invoke Lemma~\ref{lem:key} to conclude that the objective function is $\rho = \frac{2 \alpha^2}{L_0^2}$-quasar convex.

\end{proof}

\subsection{Proof of Lemma~\ref{lem:phase}}

\noindent
\textbf{Lemma~\ref{lem:phase}}
\textit{
Assume that there exists a finite constant $C_R>0$ such
that all $w \in \reals^{d}$ in the balls of radius $R$ centered at $\pm w_{*}$ satisfy
$\E_{x \sim D} \left[  \left( (w + w_*)^\top x \right)^2 \| x \|^2_2 \right] \leq C_R$. 
Then, the loss function \myeqref{eq:LD} is 
$\frac{1}{2} C_R$-generalized smooth
 w.r.t.~$h(w,w_*)= \| w -w_*\|^2_{2}$.
}

\begin{proof}
We have
\begin{equation}
\begin{split}
\textstyle f(w) - f(w_*)& \textstyle =
\E_{x \sim D}\left[ \frac{1}{2}  \left( (w^\top x)^2  - (w_*^\top x)^2  \right)^2  \right]
\\ &=
\E_{x \sim D} \left[ \frac{1}{2}  \left( (w^\top x) - (w_*^\top x)  \right)^2  \left( (w^\top x) + (w_*^\top x)  \right)^2 \right]
\\ & \leq 
\frac{1}{2}  \| w - w_* \|^2_2 
\E_{x \sim D} \left[  \left( (w + w_*)^\top x \right)^2 \| x \|^2_2 \right]
\leq \frac{1}{2} C_R\| w - w_* \|^2_2 .
\end{split}
\end{equation}

\end{proof}

\subsection{Proof of Lemma~\ref{lem:ReLU}}

\noindent
\textbf{Lemma~\ref{lem:ReLU}}
\textit{
When the link function is ReLU, the loss function \myeqref{eq:LD} is 
$\frac{1}{2} \E_{x\sim D}[\| x \|^2_2 ]$-generalized smooth
 w.r.t.~$h(w,w_*)= \| w -w_*\|^2_{2}$.
}

\begin{proof}
We have
\begin{equation}
\begin{split}
\textstyle f(w) - f(w_*)& \textstyle =
\E_x\left[ \frac{1}{2}  \left( \sigma(w^\top x)  - \sigma(w_*^\top x)  \right)^2  \right]
\leq 
\E_x\left[ \frac{1}{2}  \left( w^\top x  - w_*^\top x  \right)^2  \right]
\\ & 
\leq 
\E_x\left[ \frac{1}{2}  \|w - w_* \|^2_2 \| x \|^2_2  \right]
\\ & 
\textstyle
\leq \frac{1}{2} \E_x [\| x \|^2_2 ]  \| w - w_* \|^2_2,
\end{split}
\end{equation}
where the first inequality uses that ReLU is $1$-Lipschitz.
\end{proof}

\subsection{Proof of Lemma~\ref{lem:one} } \label{app:3.20}

\noindent
\textbf{Lemma~\ref{lem:one}}
\textit{
Suppose that the function $f(\cdot)$ satisfies
$C_{v}$-one-point convexity and $\hat{\rho}$-quasar convexity.
Then, it is also
$\left(\rho = \frac{\hat{\rho}}{\theta}, \mu = \frac{2 C_v (\theta-1)}{\hat{\rho} } \right)$-strongly quasar convex
for any $\theta > 1$.
}

\begin{proof}
We have
\begin{equation}
\begin{split}
\textstyle f(w) - f(w_*) & \textstyle  \leq \frac{1}{\hat{\rho}} \langle \nabla f(w), w - w_* \rangle
= \frac{\theta}{\hat{\rho}} \langle \nabla f(w), w - w_* \rangle
- \frac{\theta-1}{\hat{\rho}} \langle \nabla f(w), w - w_* \rangle
\\ & \textstyle \leq 
\frac{\theta}{\hat{\rho}}
\langle  \nabla f(w), w - w_* \rangle - 
\frac{\theta-1}{\hat{\rho}} C_v \| w-w_{*} \|^{2},
\end{split}
\end{equation}
where the last inequality uses the definition of $C_{v}$-one-point convexity.
Rearranging the above inequality, we get
$f(w_*) \geq f(w) + \frac{1}{\hat{\rho}/\theta} \langle \nabla f(w), w_* - w \rangle + \frac{2C_v (\theta-1)  / \hat{\rho} }{2} \| w_* - w\|^2.$ 
\end{proof}

\subsection{Proof of Lemma~\ref{lem:PL} and Lemma~\ref{lem:QG} } \label{app:3.2}

\noindent
\textbf{Lemma~\ref{lem:PL}}
\textit{
Suppose that the function $f(\cdot)$ is $\nu$-QG and $\hat{\rho}$-quasar convex w.r.t.~a global minimizer $w_{*}$. Then, it is also $(\rho= \hat{\rho} \theta  , \mu= \frac{\nu (1-\theta)}{\theta})$-strongly quasar convex for any $\theta < 1$.
}

\begin{proof}
By $\hat{\rho}$-quasar convexity, we have
\begin{equation}
\begin{split}
\textstyle \langle \nabla f(w), w - w_* \rangle  \geq \hat{\rho} ( f(w) - f(w_*) )
& \textstyle  = \hat{\rho} \theta ( f(w) - f(w_*) ) + \hat{\rho} (1-\theta) ( f(w) - f(w_*) )
\\ & \textstyle 
\geq \hat{\rho} \theta ( f(w) - f(w_*) ) + \frac{\hat{\rho} (1-\theta) \nu}{2} \| w - w_* \|^2,
\end{split}
\end{equation}
where the last inequality uses the definition of $\nu$-QG.
Rearranging the above inequality, we get
$f(w_*) \geq f(w) + \frac{1}{\hat{\rho} \theta} \langle \nabla f(w), w_* - w \rangle + \frac{\nu (1-\theta)/\theta}{2} \| w_* - w\|^2,$ which shows the result.
\end{proof}

\noindent
\textbf{Lemma~\ref{lem:QG}}
\textit{
Following the setting of Lemma~\ref{lem:gel}, assume that the smallest eigenvalue of the matrix $\E_{x \sim \mathcal{D}}[ x x^\top]$ satisfies $\lambda_{\min}(\E_{x \sim \mathcal{D}}[ x x^\top]) > 0$. Then, the function \myeqref{eq:LD} is $\alpha^2 \lambda_{\min}( \E_{ x \sim \mathcal{D}}[ x x^\top] )$-QG.
}

\begin{proof}
We have
\begin{equation}
\begin{split}
f(w) - f(w_*) &=\E_{x \sim \mathcal{D}}\left[ \frac{1}{2} \left( \sigma(w^\top x) -  \sigma(w_*^\top x)  \right)^2   \right]
\\ & = \E_{x \sim \mathcal{D}}
\left[ \frac{1}{2} \left( \frac{ \sigma(w^\top x) - \sigma(w_*^\top x) }{ w^\top x - w_*^\top x  } (w^\top x - w_*^\top x ) \right)^2   \right]
\\ & \geq \frac{1}{2} \alpha^2 
\E_{x \sim \mathcal{D}}\left[ (w^\top x - w_*^\top x )^2   \right]
\\ & 
= \frac{1}{2} \alpha^2 
(w - w_*)^\top 
\E_{x \sim \mathcal{D}}\left[ x x^\top   \right] (w - w_*)
\\ & 
\geq \frac{1}{2} \alpha^2 \lambda_{\min}( \E_{x \sim D}[ x x^\top] ) \| w - w_*\|^2,
\end{split}
\end{equation}
where the second-to-last inequality uses that the derivative of the link function satisfies $\sigma'(\cdot) \geq \alpha$.

\end{proof}

\section{Proof of Theorem~\ref{thm:quasar} and Theorem~\ref{thm:quasarstr}} \label{app:quasar}

\noindent
\textbf{Theorem~\ref{thm:quasar}}
\textit{
Assume that the function $f(\cdot)$ is $L$-smooth and $\rho$-quasar convex.
Let $\eta_{t}= \frac{2}{\rho t}, \eta'_t = 0 , \gamma_{t}= \frac{1}{L}$, and $\gamma'_{t} = \frac{\rho t}{ 2 L}$.
Then, the update $w_{t}$ of the continuized algorithm \myeqref{dxdz} satisfies
\[
\E[  f( w_t ) - f(w_*) ] \leq \frac{2 L \| z_0 - w_*\|^2}{ \rho^2 t^2 }.
\]
Furthermore, for the update $\tilde{w}_{k}$ of the discrete-time algorithm \myeqref{g1}-\myeqref{g3}, if the parameters are chosen as
$\tau_{k}= 1 - \left( \frac{T_k}{T_{k+1} }  \right)^{2/\rho}$, $\tau'_{k} = 0,  \tilde{\gamma}_k = \frac{1}{L}$, and $\tilde{\gamma}'_k =  \frac{\rho T_k}{2L}$,
then
\[ \E[T_k^2\left( f( \tilde{w}_k ) - f(w_*) \right)  ] 
 \leq \frac{2 L \| \tilde{z}_0 - w_*\|^2}{ \rho^2  }.
\]
}

\noindent
\textbf{Theorem~\ref{thm:quasarstr}}
\textit{
Assume that the function $f(\cdot)$ is $L$-smooth and $(\rho,\mu$)-strongly quasar convex, where $\mu>0$.
Let $\gamma_t = \frac{1}{L}$, $\gamma'_{t}= \frac{1}{\sqrt{\mu L} }$, $\eta_{t}'= \rho \sqrt{\frac{\mu}{L} }$, and $\eta_{t}= \sqrt{\frac{\mu}{L} }$.
Then, the update $w_{t}$ of the continuized algorithm \myeqref{dxdz} satisfies
\[
\E[ f( w_t ) - f(w_*) ]  \leq 
\left(  f(w_0) - f(w_*)  + \frac{\mu}{2} \| z_0 - w_* \|^2
\right) \exp\left( - \rho \sqrt{ \frac{\mu}{L} } t \right).
\]
Furthermore, for the update $\tilde{w}_{k}$ of the discrete-time algorithm \myeqref{g1}-\myeqref{g3}, if the parameters are chosen as
$\tau_{k}= \frac{1}{1+\rho} \left( 1 - \exp\left( -(1+\rho) \sqrt{ \frac{\mu}{L} } (T_{k+1} - T_k)  \right) \right) , \tau'_{k} = \frac{ \rho \left( 1 - \exp\left( -(1+\rho) \sqrt{ \frac{\mu}{L} } (T_{k+1} - T_k)  \right) \right) }{ \rho +  \exp\left( -(1+\rho) \sqrt{ \frac{\mu}{L} } (T_{k+1} - T_k)  \right) },\tilde{\gamma}_k = \frac{1}{L}$, and  $\tilde{\gamma}'_k = \frac{1}{\sqrt{\mu L}}$,
then
\[
\E[\exp\left( \rho \sqrt{ \frac{\mu}{L} } T_k \right) \left( f( \tilde{w}_k ) - f(w_*) \right) ]  \leq f(\tilde{w}_0) - f(w_*)  + \frac{\mu}{2} \| \tilde{z}_0 - w_* \|^2.
\]
}

The proof follows that of Theorem~2 in \citet{E22} with some modifications to account for (strong) quasar convexity.
We will consider a Lyapunov function for the continuized process \myeqref{dxdz}, defined as:
\begin{equation}
\phi_t := A_t \left( f(w_t)  - f(w_*) \right) + \frac{ B_t }{ 2} \| z_t -w_* \|^2.
\end{equation}
We will show that $\phi_{t}$ is a super-martingale under certain choices of parameters $\eta_{t}$, $\eta_{t}'$, $\gamma_{t}, \gamma_{t}'$, $A_{t}$, and $B_{t}$.
Let us first denote the process $\bar{w}_{t} := (t, w_t, z_t)$, whose dynamic is: 
\begin{equation}
d \bar{w}_t = b( \bar{w}_t ) dt + G(\bar{w}_t ) dN(t),  \quad 
b(\bar{w}_t) =
\begin{bmatrix}
1 \\ \eta_t (z_t - w_t) \\ \eta_t' (w_t - z_t)
\end{bmatrix},
\quad
G(\bar{w}_t) =
\begin{bmatrix}
0 \\ -\gamma_t \nabla f(w_t) \\ - \gamma_t' \nabla f(w_t)
\end{bmatrix}.
\end{equation}
Then, by Proposition 2 of \citet{E22}, 
we have
\begin{equation}
\phi_t = \phi_0 + \int_0^t \langle \nabla \phi(\bar{w}_s), b(\bar{w}_s) \rangle ds 
+ \int_0^t \left( \phi(\bar{w}_s + G(\bar{w}_t) ) - \phi( \bar{w}_s )    \right) ds 
+ M_t,
\end{equation}
where $M_{t}$ is a martingale.
Therefore, to show $\phi_{t}$ is a supermartingale, it suffices to show: 
\begin{equation} \label{beq0}
I_t := \langle \nabla \phi(\bar{w}_t), b(\bar{w}_t) \rangle
+  \phi(\bar{w}_t + G(\bar{w}_t) ) - \phi( \bar{w}_t )  \leq 0.
\end{equation}
For the first term of $I_{t}$.
we have
\begin{equation}  \label{beq1}
\begin{split}
\langle \nabla \phi(\bar{w}_t), b(\bar{w}_t) \rangle
& = \partial_t \phi( \bar{w}_t )
+ \langle \partial_w \phi( \bar{w}_t ), \eta_t (z_t - w_t) \rangle
+ \langle \partial_z \phi( \bar{w}_t ), \eta'_t (w_t - z_t) \rangle
\\ &
= \frac{dA_t}{dt} (  f(w_t) - f_*  )
+ \frac{1}{2} \frac{dB_t}{dt} \|z_t - w_*  \|^2 
\\ & \qquad + A_t \eta_t \langle \nabla f(w_t), z_t - w_t \rangle
+  B_t \eta_t' \langle  z_t - w_*, z_t - w_t \rangle.
\end{split}
\end{equation}
By $(\rho, \mu)$-strongly quasar-convexity, we have
\begin{equation} \label{beq2}
f(w_t) - f(w_*) \leq \frac{1}{\rho} \langle \nabla f(w_t), w_t - w_* \rangle 
- \frac{\mu}{2} \| w_t - w_* \|^2.
\end{equation}
Furthermore, the following inequality holds,
\begin{equation} \label{beq3}
\langle z_t - w_*, w_t - z_t \rangle \leq \frac{1}{2} ( \| w_t - w_*\|^2 - \| z_t - w_*\|^2  ),
\end{equation} 
because $\langle z_t - w_*, w_t - z_t \rangle = \langle z_t - w_*, w_t - w_* \rangle - \| z_t - w_* \|^{2} \leq \|z_t - w_* \| \| w_t - w_*\| - \| z_t - w_* \|^{2} \leq \frac{1}{2} ( \| w_t - w_*\|^2 - \| z_t - w_*\|^2  )$.
Combining \myeqref{beq1}-\myeqref{beq3} gives
\begin{equation} \label{beq4}
\begin{split}
\langle \nabla \phi(\bar{w}_t), b(\bar{w}_t) \rangle
& \leq
\left( \frac{1}{\rho} \frac{dA_t}{dt} - A_t \eta_t  \right)
\langle \nabla f(w_t), w_t - w_* \rangle
+\left(  B_t \eta'_t - \frac{dA_t}{dt} \mu \right) \frac{1}{2} \| w_t - w_*\|^2
\\ & \qquad  + \left(  \frac{dB_t}{dt} - B_t \eta'_t  \right) \frac{1}{2} \| z_t - w_* \|^2
+ A_t \eta_t \langle \nabla f(w_t), z_t - w_* \rangle.
\end{split}
\end{equation}

For the second term of $I_{t}$, we have
\begin{equation}
\begin{split}
\phi(\bar{w}_t + G(\bar{w}_t) ) - \phi( \bar{w}_t ) 
& = A_t \left( f(w_t - \gamma_t \nabla f(w_t) ) - f_*  \right)
\\ & \quad + \frac{B_t}{2} \left( \| z_t - \gamma'_t \nabla f(w_t) - w_* + \gamma_t \nabla f(w_t)   \|^2 - \| z_t -w_* \|^2 \right).
\end{split}
\end{equation}
Since by smoothness, we have
\begin{equation} \label{smoothness}
f(w_t - \gamma_t \nabla f(w_t) ) - f(w_t)
\leq \langle \nabla f(w_t), -\gamma_t \nabla f(w_t) \rangle + \frac{L}{2} \| \gamma_t \nabla f(w_t) \|^2 = -\gamma_t ( 2 - L \gamma_t ) \frac{1}{2} \| \nabla f(w_t) \|^2.
\end{equation}
So the second term can be bounded as
\begin{equation} \label{beq5}
\phi(\bar{w}_t + G(\bar{w}_t) ) - \phi( \bar{w}_t ) 
\leq \left( B_t (\gamma'_t)^2  - A_t \gamma_t ( 2 - L \gamma_t ) \right) \frac{1}{2} \| \nabla f(w_t) \|^2
- \beta \gamma'_t \langle z_t - w_*, \nabla f(w_t)   \rangle.
\end{equation}
Combining \myeqref{beq0}, \myeqref{beq4}, \myeqref{beq5}, we have
\begin{equation}
\begin{split}
I_t & \leq
\left( \frac{1}{\rho} \frac{dA_t}{dt} - A_t \eta_t  \right)
\langle \nabla f(w_t), w_t - w_* \rangle
+\left(  B_t \eta'_t - \frac{dA_t}{dt} \mu \right) \frac{1}{2} \| w_t - w_*\|^2
\\ & \qquad  + \left(  \frac{dB_t}{dt} - B_t \eta'_t  \right) \frac{1}{2} \| z_t - w_* \|^2
+ ( A_t \eta_t - B_t \gamma'_t ) \langle \nabla f(w_t), z_t - w_* \rangle
\\ & \qquad  +
\left( B_t (\gamma'_t)^2  - A_t \gamma_t ( 2 - L \gamma_t ) \right) \frac{1}{2} \| \nabla f(w_t) \|^2. 
\end{split}
\end{equation}

Now let us determine the parameters $\eta_{t}$, $\eta_{t}'$, $\gamma_{t}$, $\gamma_{t}'$, $A_{t}$ and $B_{t}$.
We start by taking $\gamma_{t} = \frac{1}{L}$.
Since we need $I_t \leq 0$, we want to satisfy
\begin{equation}  \label{cond}
\frac{1}{\rho} \frac{dA_t}{dt} = A_t \eta_t,
\quad \frac{dB_t}{dt} = B_t \eta'_t,
\quad A_t \eta_t = B_t \gamma_t',
\quad B_t \eta_t' = \frac{dA_t}{dt} \mu,
\quad B_t (\gamma_t')^2 = \frac{A_t}{L}.
\end{equation}
Let us choose
\begin{equation} \label{eq:c}
\gamma'_t = \sqrt{ \frac{A_t}{L B_t}  },
\quad 
\eta_t = \frac{B_t \gamma_t'}{A_t} = \sqrt{ \frac{B_t}{L A_t} },
\quad
\eta_t' = \frac{dA_t}{dt}\frac{\mu}{B_t} =  \frac{ \rho A_t \eta_t \mu}{B_t} = \rho \mu \sqrt{ \frac{A_t}{LB_t} },
\end{equation}
which ensures that the last three conditions of \myeqref{cond} are satisfied.
It remains to show that the first two hold:
\begin{equation} \label{55}
\frac{1}{\rho} \frac{dA_t}{dt} = A_t \eta_t,
\quad \frac{dB_t}{dt} = B_t \eta'_t = \rho \mu \sqrt{ \frac{A_t B_t}{L} }.
\end{equation}
We have
\begin{equation} \label{56}
\frac{d}{dt}( \sqrt{A_t} ) = \frac{1}{2  \sqrt{A_t} } \frac{d A_t}{dt}
\overset{(a)}{=} \frac{\rho}{2} \sqrt{ \frac{B_t}{L} },
\quad
\frac{d}{dt}( \sqrt{B_t} ) = \frac{1}{2  \sqrt{B_t} } \frac{d B_t}{dt}
\overset{(b)}{=} \frac{\rho \mu}{2} \sqrt{ \frac{A_t}{L} },
\end{equation}
where (a) uses that 
$ \frac{d A_t}{dt} = \rho A_{t} \eta_t =\rho A_t \sqrt{ \frac{B_t}{L A_t} } $
from \myeqref{55}, and (b) uses 
$ \frac{dB_t}{dt}  = \rho \mu \sqrt{ \frac{A_t B_t}{L} }$ from \myeqref{55}.

The equations on \myeqref{56} imply that
\begin{equation}
\frac{d^2}{dt^2}( \sqrt{A_t} ) = \frac{\rho^2 \mu}{4 L} \sqrt{A_t},
\qquad \sqrt{B_t} = \frac{2 \sqrt{L}}{\rho} \frac{d}{dt}( \sqrt{A_t} ).
\end{equation}

\subsection{$\rho$-quasar-convex}

\begin{proof} (of Theorem~\ref{thm:quasar})

For the case of $\mu=0$, we choose $A_{0}=0$ and $B_{0}=1$.
From \myeqref{56}, we have $\frac{d}{dt}(\sqrt{B_t}) = 0$ so that $B_{t}=1$
and that $\frac{d}{dt}( \sqrt{A_t} ) = \frac{\rho}{2 \sqrt{L} }$,
consequently, $\sqrt{A_t} = \rho \frac{t}{2 \sqrt{L}}$.
From \myeqref{eq:c},
we conclude $\gamma_t = \frac{1}{L}$, $\gamma'_{t}= \frac{\rho  t}{2L}$, $\eta_{t}'=0$,
and $\eta_{t}= \frac{2}{\rho t}$.
Therefore, as $\phi_{t}$ is a super-martingale, we get
\begin{equation}
\E[ A_T ( f(w_T) - f_* ) ] \leq \E[ \phi_T ] \leq \phi_0 = \| z_0 - w_* \|^2
\end{equation}
So we have
\begin{equation}
\E[ f( w_T ) - f_* ]  \leq \frac{4 L \| z_0 - w_*\|^2}{ \rho^2 t^2 }.
\end{equation}
This proves the first part of Theorem~\ref{thm:quasar}.

The ODEs \myeqref{dxdzd}-\myeqref{dxdzd1} become
\begin{align} 
dw_t & = \eta_t ( z_t - w_t ) dt = \frac{2}{\rho t} (z_t - w_t) dt
\\ d z_t & = \eta'_t (w_t - z_t) dt = 0.
\end{align}
Integrating the ODEs from time $t_{0}$ to $t$, 
\begin{align}
w_t  & = z_{t_0} + \left( \frac{t_0}{t} \right)^{2/\rho} \left( w_{t_0} - z_{t_0} \right) \label{b1}
\\ 
z_t  & = z_{t_0}. \label{b2}
\end{align}

Using Lemma~\ref{thm:even} with $t_{0}= T_{k}$, $t= T_{{k+1-}}$,
\myeqref{b1} becomes 
\begin{equation}
\tilde{v}_{k} = \tilde{z}_k \left(1 -  \left( \frac{T_k}{T_{k+1}} \right)^{2 / \rho} \right) + \tilde{w}_k  \left( \frac{T_k}{T_{k+1}} \right)^{2 / \rho} \end{equation}
This together with \myeqref{g1} implies that $\tau_{k}= 1 -    \left( \frac{T_k}{T_{k+1}} \right)^{2 / \rho}$,
while comparing \myeqref{29} and \myeqref{b2} leads to
$\tau''_{k} = 0$ and $\tau'_{k} = \frac{ \tau''_k }{1-\tau_k} = 0$.
Moreover, we have $\tilde{\gamma}_{k} = \gamma_{T_{k}} = \frac{1}{L}$
and $\tilde{\gamma}'_k = \gamma'_{T_{k}} = \frac{\rho T_{k}}{2  L} $.
This proves the second part of Theorem~\ref{thm:quasar}.

\end{proof}

\subsection{$(\rho,\mu)$-strongly quasar-convex}

\begin{proof} (of Theorem~\ref{thm:quasarstr})

From \myeqref{eq:c},
we choose $\gamma_t = \frac{1}{L}$, $\gamma'_{t}= \frac{1}{\sqrt{\mu L} }$, $\eta_{t}'= \rho \sqrt{\frac{\mu}{L} }$, $\eta_{t}= \sqrt{\frac{\mu}{L} }$.
We have
\begin{equation}
\sqrt{A_t} = \sqrt{A_0} \exp\left( \frac{\rho}{2} \sqrt{  \frac{\mu}{L} } t  \right), \qquad
\sqrt{B_t} = \sqrt{A_0} \sqrt{\mu}  \exp\left( \frac{\rho}{2} \sqrt{ \frac{\mu}{L} t }  \right).
\end{equation}
Then, we can conclude that
\begin{equation}
\E[ A_T ( f(w_T) - f_* ) ] \leq \E[ \phi_T ] \leq \phi_0 =
A_0 \left( f(w_0) - f(w_*) \right) + A_{0} \frac{\mu}{2} \| z_0 - w_* \|^2
\end{equation}
So we have
\begin{equation}
\E[ f( w_T ) - f_* ]  \leq 
\left( \left( f(w_0) - f(w_*) \right) + \frac{\mu}{2} \| z_0 - w_* \|^2
\right) \exp\left( - \rho \sqrt{ \frac{\mu}{L} } t \right).
\end{equation}

This proves the first part of Theorem~\ref{thm:quasarstr}.

The ODEs \myeqref{dxdzd}-\myeqref{dxdzd1} become
\begin{align} 
dw_t & = \eta_t ( z_t - w_t ) dt = \sqrt{ \frac{\mu}{L} } (z_t - w_t) dt
\\ d z_t & = \eta'_t (w_t - z_t) dt = \rho \sqrt{ \frac{\mu}{L} } (w_t - z_t) dt.
\end{align}
The solutions are
\begin{align}
w_t  & = \frac{\rho w_{t_0} + z_{t_0}}{1+\rho} + \frac{w_{t_0} - z_{t_0}}{1+\rho} \exp\left( -(1+\rho) \sqrt{ \frac{\mu}{L} } (t - t_0)  \right)
\\ & = w_{t_0} + \frac{1}{1+\rho} \left( 1 - \exp\left( -(1+\rho) \sqrt{ \frac{\mu}{L} } (t - t_0)  \right) \right)  (z_{t_0} - w_{t_0})
\\ z_t & = \frac{\rho w_{t_0} + z_{t_0}}{1+\rho} + \rho \frac{z_{t_0} - w_{t_0}}{1+\rho} \exp\left( -(1+\rho) \sqrt{ \frac{\mu}{L} } (t - t_0)  \right)
\\ & = z_{t_0} +
 \frac{\rho}{1+\rho} \left( 1 - \exp\left( -(1+\rho) \sqrt{ \frac{\mu}{L} } (t - t_0)  \right) \right)  (w_{t_0} - z_{t_0}).
\end{align}
Taking $t_{0}= T_{k}$, $t= T_{{k+1-}}$, the above becomes
\begin{align}
\tilde{v}_{k} & = \tilde{w}_{k} + \frac{1}{1+\rho} \left( 1 - \exp\left( -(1+\rho) \sqrt{ \frac{\mu}{L} } (T_{k+1} - T_k)  \right) \right)  (\tilde{z}_{k} - \tilde{w}_{k}) \label{90}
\\
\tilde{z}_{T_{{k+1-}}} & = \tilde{z}_{k} + \frac{\rho}{1+\rho} \left( 1 - \exp\left( -(1+\rho) \sqrt{ \frac{\mu}{L} } (T_{k+1} - T_k)  \right) \right)  (\tilde{w}_{k} - \tilde{z}_{k} ) \label{91}
\end{align}

Comparing equation \myeqref{90} and \myeqref{gg1}, we know that $\tau_{k}= \frac{1}{1+\rho} \left( 1 - \exp\left( -(1+\rho) \sqrt{ \frac{\mu}{L} } (T_{k+1} - T_k)  \right) \right) $.
Furthermore, by using \myeqref{90}, we get 
\begin{equation} \label{910}
\tilde{w}_{k} - \tilde{z}_{k}  =  \frac{1}{1-
\frac{1}{1+\rho} \left( 1 - \exp\left( -(1+\rho) \sqrt{ \frac{\mu}{L} } (T_{k+1} - T_k)  \right) \right)
 } (\tilde{v}_{k} - \tilde{z}_{k} ).
\end{equation}
Based on \myeqref{91}, \myeqref{910}, and \myeqref{30}, we conclude that
$\tau_{k}' = \frac{ \rho \left( 1 - \exp\left( -(1+\rho) \sqrt{ \frac{\mu}{L} } (T_{k+1} - T_k)  \right) \right) }{ \rho +  \exp\left( -(1+\rho) \sqrt{ \frac{\mu}{L} } (T_{k+1} - T_k)  \right)  }$.
Moreover, we have $\tilde{\gamma}_{k} = \gamma_{{T_k}} = \frac{1}{L}$
and $\tilde{\gamma}'_k = \gamma'_{T_k} = \frac{1}{\sqrt{\mu L} } $.
This proves the second part of Theorem~\ref{thm:quasarstr}.

\end{proof}

\subsection{Proof of Corollary~\ref{cor} and Corollary~\ref{cor2}} \label{app:cor}

\noindent
\textbf{Corollary~\ref{cor}:}
\textit{
The update $\tilde{w}_{k}$ of the algorithm \myeqref{g1}-\myeqref{g3}
with the same parameters indicated in Theorem~\ref{thm:quasar} satisfies
$f( \tilde{w}_k ) - f(w_*)   \leq \frac{2 c_0 L \| \tilde{z}_0 - w_*\|^2}{ (1-c)^2 \rho^2  k^2 }$, 
with probability at least $1- \frac{1}{c^2 k}  - \frac{1}{c_0}$ for any $c \in (0,1)$ and $c_{0} > 1$.
}

\begin{proof}
Using Markov's inequality and Theorem~\ref{thm:quasar}, we get
\begin{equation}
\textstyle
\mathrm{Pr}\left[ T_k^2 \left( f( \tilde{w}_k ) - f(w_*) \right) \geq C_0 \right]
\leq \frac{ \E\left[  T_k^2 \left( f( \tilde{w}_k ) - f(w_*) \right) \right] }{ C_0 }
\leq \frac{2 L \| \tilde{z}_0 - w_*\|^2 / \rho^2}{ C_0 }.
\end{equation}
Let $C_{0} := c_{0} 2 L \| \tilde{z}_0 - w_*\|^2 / \rho^2$, where $c_{0}>1$ is a universal constant. Then, with probability $1-\frac{1}{c_0}$,
\begin{equation} \label{tk}
\textstyle
T_k^2 \left( f( \tilde{w}_k ) - f(w_*) \right)  \leq
 \frac{2 c_0 L \| \tilde{z}_0 - w_*\|^2}{ \rho^2  }.
\end{equation}
By Chebyshev's inequality, we have $\mathrm{Pr}\left( | T_k - \E[T_k] | \geq c \E[T_k]  \right) \leq \frac{ \mathrm{Var}(T_k)}{ c^2 (\E[T_k])^2 }$, where $c>0$ is a universal constant. Hence, we have $T_{k} \geq (1-c) \E[T_k] = (1-c) k$ with probability at least 
$1 - \frac{1}{c^2 k}$, where we used the fact that $\E[T_k] = \mathrm{Var}[T_k] = k$ as $T_{k}$ is the sum of $k$ Poisson random variables with mean $1$.
Combining this lower bound of $T_{k}$ and \myeqref{tk} leads to the result.
\end{proof}

\noindent
\textbf{Corollary~\ref{cor2}:}
\textit{
The update $\tilde{w}_{k}$ of the algorithm \myeqref{g1}-\myeqref{g3}
with the same parameters indicated in Theorem~\ref{thm:quasarstr} satisfies
$f( \tilde{w}_k ) - f(w_*)  \leq 
c_0 \left(  f(\tilde{w}_0) - f(w_*) + \frac{\mu}{2} \| \tilde{z}_0 - w_* \|^2
\right) \exp\left( - \rho \sqrt{ \frac{\mu}{L} } (1-c) k \right)$,
with probability at least $1- \frac{1}{c^2 k}  - \frac{1}{c_0}$ for any $c \in (0,1)$ and $c_{0} > 1$.
}

\begin{proof}

Using Markov's inequality and Theorem~\ref{thm:quasarstr}, we get
\begin{equation}
\textstyle
\mathrm{Pr}\left[ 
\exp\left( \rho \sqrt{ \frac{\mu}{L} } T_k \right) \left( f( \tilde{w}_k ) - f(w_*) \right) \geq C_0 \right]
\leq \frac{ \E\left[ 
\exp\left( \rho \sqrt{ \frac{\mu}{L} } T_k \right) \left( f( \tilde{w}_k ) - f(w_*) \right) \right] }{ C_0 }
\leq \frac{  f(\tilde{w}_0) - f(w_*)  + \frac{\mu}{2} \| \tilde{z}_0 - w_* \|^2
 }{ C_0 }.
\end{equation}
Let $C_{0} := c_{0} \left(  f(\tilde{w}_0) - f(w_*)  + \frac{\mu}{2} \| \tilde{z}_0 - w_* \|^2
  \right)$, where $c_{0}>1$ is a universal constant. Then, with probability $1-\frac{1}{c_0}$,
\begin{equation} \label{tkk}
\textstyle
\exp\left( \rho \sqrt{ \frac{\mu}{L} } T_k \right) \left( f( \tilde{w}_k ) - f(w_*) \right) 
 \leq
c_0
\left( \left( f(\tilde{w}_0) - f(w_*) \right) + \frac{\mu}{2} \| \tilde{z}_0 - w_* \|^2
\right)
\end{equation}
By Chebyshev's inequality, we have $\mathrm{Pr}\left( | T_k - \E[T_k] | \geq c \E[T_k]  \right) \leq \frac{ \mathrm{Var}(T_k)}{ c^2 (\E[T_k])^2 }$, where $c>0$ is a universal constant. Hence, we have $T_{k} \geq (1-c) \E[T_k] = (1-c) k$ with probability at least 
$1 - \frac{1}{c^2 k}$, where we used the fact that $\E[T_k] = \mathrm{Var}[T_k] = k$ as $T_{k}$ is the sum of $k$ Poisson random variables with mean $1$.
Combining this lower bound of $T_{k}$ and \myeqref{tkk} leads to the result.
\end{proof}

\section{Proof of Theorem~\ref{thm:accglm}} \label{app:glm}

\noindent
\textbf{Theorem~\ref{thm:accglm}} \textbf{(Continuized algorithm~\myeqref{dxdzS} for GLMs)} 
\textit{
Choose $\eta_{t}'=  \sqrt{\frac{\mu}{\tilde{\kappa} R^2} }$, $\eta_{t}= \sqrt{\frac{\mu}{\tilde{\kappa} R^2} }$, $\gamma_t = \frac{1}{ R^2}$, and $\gamma'_{t}= \frac{1}{\sqrt{\mu \tilde{\kappa} R^2} }$.
Then, the update $w_{t}$ of \myeqref{dxdzS} satisfies
\[ \E\left[ \frac{1}{2} \| w_t - w_* \|^2 \right]  \leq 
\frac{1}{2} \left( \| w_0 - w_* \|^2 +  \mu \| z_0 - w_* \|^2_{H(w_0)^{-1}}
\right) 
\exp\left( - \sqrt{ \frac{\mu}{\tilde{\kappa} R^2} } t \right).
\]
}

\begin{proof}

Let us denote $H_{t}:= H(w_t) =  \E_{x }[ \psi(w_t^\top x, w_*^\top x) x x^\top ]$ and consider a Lyapunov function for the continuized process \myeqref{dxdz}, defined as:
\begin{equation}
\phi_t := \frac{A_t }{2} \| w_t - w_*\|^2 + \frac{ B_t }{ 2} \| z_t -w_* \|^2_{H_t^{-1}}.
\end{equation}
We first show that $\phi_{t}$ is a super-martingale under certain values of parameters $\eta_{t}$, $\eta_{t}'$, $\gamma_{t}, \gamma_{t}'$, $A_{t}$, and $B_{t}$.
Let us denote the process $\bar{w}_{t} := (t, w_t, z_t)$, which satisfies the following equation:
\begin{equation}
d \bar{w}_t = b( \bar{w}_t ) dt + \int_{\Xi} G(\bar{w}_t;\xi ) dN(t,\xi),  \quad 
b(\bar{w}_t) =
\begin{bmatrix}
1 \\ \eta_t (z_t - w_t) \\ \eta_t' (w_t - z_t)
\end{bmatrix},
\quad
G(\bar{w}_t;\xi) =
\begin{bmatrix}
0 \\ -\gamma_t g(w_t;\xi) \\ - \gamma_t' g(w_t;\xi)
\end{bmatrix}.
\end{equation}
Then, by Proposition 2 of \citet{E22}, we have
\begin{equation}
\phi_t = \phi_0 + \int_0^t I_s ds + M_t,
\end{equation}
where $M_{t}$ is a martingale and $I_{t}$ is
\begin{equation} \label{aeq0}
I_t := \langle \nabla \phi(\bar{w}_t), b(\bar{w}_t) \rangle
+ \E_{\xi}[ \phi(\bar{w}_t + G(\bar{w}_t;\xi) ) - \phi( \bar{w}_t ) ]
\end{equation}
For the first term of $I_{t}$,
we have
\begin{equation} \label{a20}
\begin{split}
&\langle \nabla \phi(\bar{w}_t), b(\bar{w}_t) \rangle
= \partial_t \phi( \bar{w}_t )
+ \langle \partial_w \phi( \bar{w}_t ), \eta_t (z_t - w_t) \rangle
+ \langle \partial_z \phi( \bar{w}_t ), \eta'_t (w_t - z_t) \rangle
\\ & \quad
= \frac{1}{2} \frac{dA_t}{dt} \| w_t - w_* \|^2
+ \frac{1}{2} \frac{dB_t}{dt} \|z_t - w_*  \|^2_{ H_t^{-1} } 
+ A_t \eta_t \langle w_t - w_*, z_t - w_t \rangle
+  B_t \eta_t' \langle  z_t - w_*, H_t^{-1} (w_t - z_t) \rangle.
\end{split}
\end{equation}
Since $H_{t}= \E_x[ \psi(w_t^\top x, w_*^\top x) x x^\top ] \succeq \mu I_{d}$,
we have
\begin{equation} \label{71}
\frac{\mu}{2} \| w_t - w_* \|^2_{ H_t^{-1} } \leq \frac{1}{2} \| w_t - w_*\|^2.
\end{equation}
Using \myeqref{71}, we have
\begin{equation}
\frac{1}{2} \| w_t - w_* \|^2 \leq \| w_t - w_* \|^2 - \frac{\mu}{2} \| w_t - w_* \|^2_{ H_t^{-1} }.
\end{equation}

Furthermore, the following inequalities hold,
\begin{equation} \label{a21}
\begin{split}
\langle z_t - w_*, H_t^{-1} (w_t - z_t) \rangle & \leq \frac{1}{2} ( \| w_t - w_*\|^2_{ H_t^{-1} } - \| z_t - w_*\|^2_{ H_t^{-1} }  ).
\end{split}
\end{equation} 
This is because $\langle z_t - w_*, H_t^{-1} (w_t - z_t) \rangle  = \langle z_t - w_*, H_t^{-1}( w_t - w_* ) \rangle - \| z_t - w_*\|^2_{ H_t^{-1} }
\leq \frac{1}{2} \left( \|w_t - w_* \|_{H_t^{-1}}^2 - \|z_t - w_*\|_{H_t^{-1}}^2 \right) $.

Combining \myeqref{a20}-\myeqref{a21}, we get
\begin{equation} \label{aeq1}
\begin{split}
\langle \nabla \phi(\bar{w}_t), b(\bar{w}_t) \rangle
& \leq
\frac{dA_t}{dt} \| w_t - w_*\|^2
+\left(  B_t \eta'_t - \frac{dA_t}{dt} \mu \right) \frac{1}{2} \| w_t - w_*\|^2_{ H_t^{-1} }
\\ & \qquad  + \left(  \frac{dB_t}{dt} - B_t \eta'_t   \right) \frac{1}{2} \| z_t - w_* \|^2_{ H_t^{-1} }
+ A_t \eta_t \langle w_t - w_*, z_t - w_t \rangle.
\end{split}
\end{equation}


For the second term of $I_{t}$, we have
\begin{equation} \label{aeq2}
\begin{split}
&\E_{\xi}[ \phi(\bar{w}_t + G(\bar{w}_t;\xi) ) - \phi( \bar{w}_t ) ]
\\ &  = \E_{\xi}\left[ \frac{A_t}{2} \left( \| w_t - \gamma_t g(w_t;\xi) - w_* \|^2 - \| w_t - w_*\|^2 \right)
+ \frac{B_t}{2} \left( \| z_t - \gamma'_t g(w_t;\xi) - w_*    \|^2_{ H_t^{-1} } - \| z_t -w_* \|^2_{ H_t^{-1} } \right) \right]
\\ & = \E_{\xi}\left[   \frac{A_t \gamma_t^2}{2} \| g(w_t;\xi) \|^2
- A_t \gamma_t \langle w_t - w_*, g(w_t;\xi)   \rangle
+ \frac{B_t (\gamma'_t)^2}{2} \| g(w_t;\xi) \|_{ H_t^{-1} }^2
- B_t \gamma'_t \langle z_t - w_*, H_t^{-1} g(w_t;\xi)   \rangle \right].
\end{split}
\end{equation}
Let us upper-bound the first two terms in \myeqref{aeq2}.
We have
\begin{equation}
\begin{split}
\E_{\xi}[ \| g(w_t; \xi) \|^2 ] & =
\E_x[ \left( \sigma(w_t^\top x) - y \right) x, \left( \sigma(w_t^\top x) - y \right)  x \rangle ]
\\ & = \E_x[ \psi(w_t^\top x, w_*^\top x)^2 ( (w_t - w_*)^\top x )^2 \| x \|^2  ]
\\ &  = \langle w_t - w_*, 
\E_x\left[ \psi(w_t^\top x, w_*^\top x)^2
\| x \|^2   x x^\top \right]  w_t - w_* \rangle 
\\ &  \leq R^2 \langle w_t - w_*, 
\E_x\left[ \psi(w_t^\top x, w_*^\top x)
  x x^\top \right]  w_t - w_* \rangle] = R^2 \|  w_t - w_* \|^2_{H_t},
\end{split}
\end{equation}
where in the last inequality we used
\begin{equation}
\E_x\left[ \psi(w_t^\top x, w_*^\top x)^2
\| x \|^2   x x^\top \right]
\preceq R^2 \E_x\left[ \psi(w_t^\top x, w_*^\top x)
  x x^\top \right] = R^2 H_t.
\end{equation}

Furthermore, we have
\begin{equation}
\begin{split}
\E_{\xi}[ \langle g(w_t;\xi), w_t - w_* \rangle ]
& = \E_{x}[ \langle \left( \sigma(w^\top x) - \sigma(w_*^\top x)  \right) x , w_t - w_* \rangle ]
\\ & =  (w_t -w_*)^\top \E_{x}[ \psi( w^\top x , w_*^\top x ) x x^\top] (w_t - w_*)
= \| w_t - w_* \|^2_{H_t}.
\end{split}
\end{equation}

Therefore, the first two terms in \myeqref{aeq2} can be bounded as
\begin{equation} \label{beqq}
\begin{split}
\E_{\xi}\left[ \frac{A_t \gamma_t^2}{2} \| g(w_t;\xi) \|^2
- A_t \gamma_t \langle w_t - w_*, g(w_t;\xi)   \rangle \right]
\leq \left(  \frac{A_t \gamma_t^2 R^2}{2} - A_t \gamma_t \right) \| w_t - w_* \|^2_{ H_t}.
\end{split}
\end{equation}
Now let us switch to upper-bound the last two terms in \myeqref{aeq2}.
We have
\begin{equation}
\begin{split}
\E_{\xi}[ \|  g(w_t; \xi) \|^2_{H_t^{-1}}  ] & = 
\E_x\left[ \left( \sigma(w_t^\top x) - y \right) x, H_t^{-1} \left( \sigma(w_t^\top x) - y \right)  x \rangle \right]
\\ &  =
\E_x\left[  \psi(w_t^\top x, w_*^\top x)^2  ( (w_t - w_*)^\top x )^2  \| x \|^2_{H_t^{-1}}  \right]
\\ &  = \langle w_t - w_*, 
\E_x\left[ \psi(w_t^\top x, w_*^\top x)^2
\| x \|^2_{H_t^{-1}}    x x^\top \right] , w_t - w_* \rangle 
\\ &
\leq  \tilde{\kappa} \E_x[ \langle w_t - w_*, \psi(w_t^\top x, w_*^\top x) x x^\top (w_t - w_*) \rangle ] = \tilde{\kappa} \| w_t - w_*\|^2_{H_t},
\end{split}
\end{equation}
where in the last inequality, 
we used the assumption that
\begin{equation}
\E_{x}\left[ \psi(w_t^\top x, w_*^\top x)^2
\| x \|^2_{H_t^{-1}}    x x^\top \right] \preceq \tilde{\kappa} \E_{x}[ \psi(w_t^\top x, w_*^\top x) x x^\top ] = \tilde{\kappa} H_t.
\end{equation}
We also have
\begin{equation}
\begin{split}
\E_{\xi}\left[ \gamma'_t  \langle z_t - w_*, H_t^{-1} g(w_t, \xi) \rangle  \right]
& =
\E_{\xi}\left[ \gamma'_t  \langle z_t - w_*, H_t^{-1} \left( \sigma(w_t^\top x) - \sigma(w_*^\top x)   \right) x \rangle  \right]
\\ & =
\E_{\xi}\left[ \gamma'_t  \langle z_t - w_*, H_t^{-1} \psi( w_t^\top x, w_*^\top x)   x x^\top (w_t - w_*) \rangle  \right]
\\ & = \gamma'_t \langle z_t - w_*, w_t - w_* \rangle.
\end{split}
\end{equation}
So the last two terms in \myeqref{aeq2} can be bounded as
\begin{equation} \label{beq2} 
\begin{split}
\E_{\xi}\left[ \frac{B_t (\gamma'_t)^2}{2} \| g(w_t;\xi) \|_{ H_t^{-1} }^2
- B_t \gamma'_t \langle z_t - w_*, H_t^{-1} g(w_t;\xi) \rangle \right]
\leq \frac{B_t (\gamma'_t)^2 \tilde{\kappa}}{2} \| w_t - w_* \|_{H_t}^2
- B_t \gamma'_t \langle z_t - w_*, w_t - w_* \rangle.
\end{split}
\end{equation}
Therefore, combining \myeqref{beqq}, \myeqref{beq2}, and \myeqref{aeq2},
we have
\begin{equation} \label{aeq5}
\begin{split}
&\E_{\xi}[ \phi(\bar{w}_t + G(\bar{w}_t;\xi) ) - \phi( \bar{w}_t ) ]
\\ & \leq
\left(  \frac{A_t \gamma_t^2 R^2}{2} - A_t \gamma_t + \frac{B_t (\gamma'_t)^2 \tilde{\kappa}}{2}  \right) \| w_t - w_* \|^2_{ H_t}
- B_t \gamma'_t \langle z_t - w_*, w_t - w_* \rangle.
\end{split}
\end{equation}

Combining \myeqref{aeq0}, \myeqref{aeq1}, and \myeqref{aeq5}, we have:
\begin{equation} \label{aeq6}
\begin{split}
&I_t \leq
\left( \frac{dA_t}{dt} - A_t \eta_t\right) \| w_t - w_*\|^2
+\left(  B_t \eta'_t - \frac{dA_t}{dt} \mu \right) \frac{1}{2} \| w_t - w_*\|^2_{ H_t^{-1} }
\\ & \qquad  + \left(  \frac{dB_t}{dt} - B_t \eta'_t   \right) \frac{1}{2} \| z_t - w_* \|^2_{ H_t^{-1} }
+ \left( A_t \eta_t - B_t \gamma'_t \right) \langle w_t - w_*, z_t - w_* \rangle
\\ &  \qquad +
\left(  \frac{A_t \gamma_t^2 R^2}{2} - A_t \gamma_t + \frac{B_t (\gamma'_t)^2 \tilde{\kappa}}{2}  \right) \| w_t - w_* \|^2_{ H_t}.
\end{split}
\end{equation}

Now let us determine $\eta_{t}$, $\eta_{t}'$, $\gamma_{t}$, $\gamma_{t}'$, $A_{t}$, and $B_{t}$.
We start by taking $\gamma_{t} = \frac{1}{R^2}$.
We want $I_t \leq 0$, so we want to satisfy
\begin{equation}  \label{cond2}
\frac{dA_t}{dt} = A_t \eta_t,
\quad \frac{dB_t}{dt} = B_t \eta'_t,
\quad A_t \eta_t = B_t \gamma_t',
\quad B_t \eta_t' = \frac{dA_t}{dt} \mu,
\quad B_t (\gamma_t')^2 = \frac{A_t}{ \tilde{\kappa} R^2 }.
\end{equation}
Let us choose
\begin{equation} \label{eq:c2}
\gamma'_t = \sqrt{ \frac{A_t}{B_t \tilde{\kappa} R^2 }  },
\quad 
\eta_t = \frac{B_t \gamma_t'}{A_t} = \sqrt{ \frac{B_t}{\tilde{\kappa} R^2  A_t} },
\quad
\eta_t' = \frac{dA_t}{dt}\frac{\mu}{B_t} =  \frac{ A_t \eta_t \mu}{B_t} = \mu \sqrt{ \frac{A_t}{\tilde{\kappa} R^2 B_t} },
\end{equation}
which ensures that the last three conditions of \myeqref{cond2} are satisfied.
It remains to show that the first two hold:
\begin{equation} \label{eq:c22}
\frac{dA_t}{dt} = A_t \eta_t,
\quad \frac{dB_t}{dt} = B_t \eta'_t =  \mu \sqrt{ \frac{A_t B_t}{ \tilde{\kappa} R^2 } }.
\end{equation}
We have
\begin{equation} \label{562}
\frac{d}{dt}( \sqrt{A_t} ) = \frac{1}{2  \sqrt{A_t} } \frac{d A_t}{dt}
\overset{(a)}{=} \frac{1}{2} \sqrt{ \frac{B_t}{ \tilde{\kappa} R^2 } },
\quad
\frac{d}{dt}( \sqrt{B_t} ) = \frac{1}{2  \sqrt{B_t} } \frac{d B_t}{dt}
\overset{(b)}{=} \frac{\mu}{2} \sqrt{ \frac{A_t}{\tilde{\kappa} R^2 } },
\end{equation}
where (a) uses that 
$\frac{d A_t}{dt} = A_t \eta_t = A_t \sqrt{ \frac{B_t}{\tilde{\kappa} R^2  A_t} }$
and (b) uses that $\frac{d B_t}{dt} = B_t \eta_t' = \mu \sqrt{ \frac{A_t}{\tilde{\kappa} R^2 B_t} }$ from \myeqref{eq:c2} and \myeqref{eq:c22}.
The equations on \myeqref{562} imply that
\begin{equation}
\frac{d^2}{dt^2}( \sqrt{A_t} ) = \frac{\mu}{4 \tilde{\kappa} R^2} \sqrt{A_t},
\qquad \sqrt{B_t} = 2 \sqrt{\tilde{\kappa} R^2} \frac{d}{dt}( \sqrt{A_t} ).
\end{equation}

Let us choose $\gamma_t = \frac{1}{R^2}$, $\gamma'_{t}= \frac{1}{\sqrt{\mu \tilde{\kappa} R^2} }$, $\eta_{t}'=  \sqrt{\frac{\mu}{\tilde{\kappa} R^2} }$, $\eta_{t}= \sqrt{\frac{\mu}{\tilde{\kappa} R^2} }$.
We have
\begin{equation} \label{93}
\sqrt{A_t} = \sqrt{A_0} \exp\left( \frac{1}{2} \sqrt{  \frac{\mu}{\tilde{\kappa} R^2} } t  \right), \qquad
\sqrt{B_t} = \sqrt{A_0} \sqrt{\mu}  \exp\left( \frac{1}{2} \sqrt{ \frac{\mu}{\tilde{\kappa} R^2} t }  \right).
\end{equation}
Then, we can conclude that
\begin{equation}
\E[ A_t \frac{1}{2} \| w_t - w_*\|^2 ] \leq \E[ \phi_t ] \leq \phi_0 =
 \left(  \frac{A_0}{2} \| w_0 - w_*\|^2 + \frac{B_0}{2} \| z_0 - w_*\|^2_{H(w_0)^{-1}}    \right).
\end{equation}
So we have
\begin{equation}
\E[ \frac{1}{2} \| w_t - w_*\|^2  ]  \leq 
\frac{1}{A_0} \left(  \frac{A_0}{2} \| w_0 - w_*\|^2 + \frac{B_0}{2} \| z_0 - w_*\|^2_{H(w_0)^{-1}}    \right) \exp\left( - \sqrt{ \frac{\mu}{\tilde{\kappa} R^2} } t \right),
\end{equation}
and we have $\frac{B_0}{A_0}= \mu$ from \myeqref{93} and we can choose $A_{0}= 1$.
This proves 
Theorem~\ref{thm:accglm}.

Now let us switch to determine the corresponding parameters of the discrete-time algorithm \myeqref{g1}-\myeqref{g3}, where the gradient $\nabla (w_t)$ is now replaced with the stochastic pseudo-gradient $g(w_t;\xi_t)$.
The ODEs \myeqref{dxdzd}-\myeqref{dxdzd1} become
\begin{align} 
dw_t & = \eta_t ( z_t - w_t ) dt = \sqrt{ \frac{\mu}{\tilde{\kappa} R^2} } (z_t - w_t) dt
\\ d z_t & = \eta'_t (w_t - z_t) dt = \sqrt{ \frac{\mu}{\tilde{\kappa} R^2} } (w_t - z_t) dt.
\end{align}
The solutions are
\begin{align}
w_t  & = \frac{w_{t_0} + z_{t_0}}{2} + \frac{w_{t_0} - z_{t_0}}{2} \exp\left( -2 \sqrt{ \frac{\mu}{ \tilde{\kappa} R^2 } } (t - t_0)  \right)
\\ & = w_{t_0} + \frac{1}{2} \left( 1 - \exp\left( -2 \sqrt{ \frac{\mu}{\tilde{\kappa} R^2} } (t - t_0)  \right) \right)  (z_{t_0} - w_{t_0})
\\ z_t & = \frac{w_{t_0} + z_{t_0}}{2} + \frac{z_{t_0} - w_{t_0}}{2} \exp\left( -2 \sqrt{ \frac{\mu}{\tilde{\kappa} R^2} } (t - t_0)  \right)
\\ & = z_{t_0} +
 \frac{1}{2} \left( 1 - \exp\left( -2 \sqrt{ \frac{\mu}{\tilde{\kappa} R^2} } (t - t_0)  \right) \right)  (w_{t_0} - z_{t_0}).
\end{align}
Taking $t_{0}= T_{k}$, $t= T_{{k+1-}}$, the above becomes
\begin{align}
\tilde{v}_{k} & = \tilde{w}_{k} + \frac{1}{2} \left( 1 - \exp\left( -2 \sqrt{ \frac{\mu}{\tilde{\kappa} R^2} } (T_{k+1} - T_k)  \right) \right)  (\tilde{z}_{k} - \tilde{w}_{k})
\label{135}
\\
\tilde{z}_{T_{k+1-}} & = \tilde{z}_{k} + \frac{1}{2} \left( 1 - \exp\left( -2 \sqrt{ \frac{\mu}{\tilde{\kappa} R^2} } (T_{k+1} - T_k)  \right) \right)  (\tilde{w}_{k} - \tilde{z}_{k} ). \label{136}
\end{align}
Comparing equation \myeqref{135} and \myeqref{gg1}, we know $\tau_{k}= \frac{1}{2} \left( 1 - \exp\left( -2 \sqrt{ \frac{\mu}{\tilde{\kappa} R^2} } (T_{k+1} - T_k)  \right) \right) $
and $\tau''_{k} =\frac{1}{2} \left( 1 - \exp\left( -2 \sqrt{ \frac{\mu}{\tilde{\kappa} R^2} } (T_{k+1} - T_k)  \right) \right) $.
Furthermore, by using \myeqref{135}, we get 
\begin{equation} \label{9100}
\tilde{w}_{k} - \tilde{z}_{k}  =  \frac{1}{1-
\frac{1}{2} \left( 1 - \exp\left( - 2 \sqrt{ \frac{\mu}{\tilde{\kappa} R^2} } (T_{k+1} - T_k)  \right) \right)
 } (\tilde{v}_{k} - \tilde{z}_{k} ).
\end{equation}
Based on \myeqref{136}, \myeqref{9100}, and \myeqref{30}, we conclude that
$\tau_{k}' = \frac{ 1 - \exp\left( -2 \sqrt{ \frac{\mu}{\tilde{\kappa} R^2} } (T_{k+1} - T_k)  \right)  }{ 1+  \exp\left( -2 \sqrt{ \frac{\mu}{\tilde{\kappa} R^2} } (T_{k+1} - T_k)  \right)  }$.
Moreover, we have $\tilde{\gamma}_{k} = \gamma_{{T_k}} = \frac{1}{ R^2}$
and $\tilde{\gamma}'_k = \gamma'_{T_k} = \frac{1}{\sqrt{\mu \tilde{\kappa} R^2} } $.
We can now conclude that the corresponding discrete-time algorithm under the above choice of parameters satisfies 
\begin{equation}
\E\left[
\exp\left( \sqrt{ \frac{\mu}{\tilde{\kappa} R^2} } T_k \right)
 \frac{1}{2} \| \tilde{w}_k - w_*\|^2  \right]  \leq 
 \left(  \frac{1}{2} \| \tilde{w}_0 - w_*\|^2 + \frac{\mu}{2} \| \tilde{z}_0 - w_*\|^2_{H(\tilde{w}_0)^{-1}}    \right) .
\end{equation}

\end{proof}

\end{document}